\definecolor{e-mail}{rgb}{0,.40,.80}
\definecolor{reference}{rgb}{.20,.60,.22}
\definecolor{citation}{rgb}{0,.40,.80}
\newtheorem{thm}{Theorem}
\newtheorem{cor}[thm]{Corollary}
\newtheorem{lem}[thm]{Lemma}
\newtheorem{prop}[thm]{Proposition}
\theoremstyle{definition}
\newtheorem{defn}[thm]{Definition}
\theoremstyle{remark}
\newtheorem{rem}[thm]{Remark}
\numberwithin{thm}{section}
\theoremstyle{definition}
\theoremstyle{definition}
\theoremstyle{definition}
\numberwithin{equation}{section}
 \title[Computing $\mathrm{PPV}$ groups of second-order equations]{Computing the differential Galois group of a parameterized second-order linear differential equation}
 \author{Carlos E. Arreche}
\email{carreche@gc.cuny.edu}
\address{Mathematics Department, The Graduate Center of the City University of New York, New York, NY 10016}
 \keywords{Parameterized differential equation, parameterized Picard-Vessiot theory, linear differential algebraic group.}
 \subjclass[2010]{Primary 34M15; Secondary 12H20, 34M03, 20H20, 13N10, 37K20}
 \thanks{This material is based upon work partially supported by a National Science Foundation (NSF) Graduate Research Fellowship (grant 40017-04-05) and by NSF grant CCF-0952591.}
\begin{document}


\begin{abstract} We develop algorithms to compute the differential Galois group $G$ associated to a parameterized second-order homogeneous linear differential equation of the form \[\tfrac{\partial^2}{\partial x^2}Y+r_1\tfrac{\partial}{\partial x}Y+r_0Y=0,\] where the coefficients $r_1,r_0\in F(x)$ are rational functions in $x$ with coefficients in a partial differential field $F$ of characteristic zero. Our work relies on the procedure developed by Dreyfus to compute $G$ under the assumption that $r_1=0$. We show how to complete this procedure to cover the cases where $r_1\neq 0$, by reinterpreting a classical change of variables procedure in Galois-theoretic terms. \end{abstract}

\maketitle


\section{Introduction} Consider a linear differential equation of the form \begin{equation}\label{equation}\delta_x^nY+\sum_{i=0}^{n-1} r_i\delta_x^iY=0,\end{equation} where $r_i\in K:=F(x)$, the field of rational functions in $x$ with coefficients in a $\Pi$-field $F$, $\delta_x$ denotes the derivative with respect to $x$, and $\Pi:=\{\partial_1,\dots,\partial_m\}$ is a set of commuting derivations. Letting $\Delta:=\{\delta_x\}\cup\Pi$, consider $K$ as a $\Delta$-field by setting $\partial_j x=0$ for each $j$. The parameterized Picard-Vessiot theory of \cite{cassidy-singer:2006} associates a parameterized Picard-Vessiot ($\mathrm{PPV}$) group to such an equation. In analogy with the Picard-Vessiot theory developed by Kolchin \cite{kolchin:1948}, the $\mathrm{PPV}$-group measures the $\Pi$-algebraic relations amongst the solutions to \eqref{equation}. The differential Galois groups that arise in this theory are linear differential algebraic groups: subgroups of $\mathrm{GL}_n$ that are defined by the vanishing of systems of polynomial differential equations in the matrix entries. The study of linear differential algebraic groups was pioneered in \cite{cassidy:1972}. The parameterized Picard-Vessiot theory of \cite{cassidy-singer:2006} is a special case of an earlier generalization of Kolchin's theory, presented in \cite{landesman:2008}, as well as the differential Galois theory for difference-differential equations with parameters \cite{hardouin-singer:2008}. 

This work addresses the explicit computation of the $\mathrm{PPV}$-group $G$ corresponding to a second-order parameterized linear differential equation \vspace{-.06in}\begin{equation}\label{original}\delta_x^2Y+r_1\delta_xY+r_0Y=0,\end{equation} where $r_1,r_0\in F(x)=:K$, and $F$ is a $\Pi$-field. In \cite{dreyfus:2011}, Dreyfus applies results from \cite{dreyfus-thesis} to develop algorithms to compute $G$, under the assumption that $r_1=0$ (see also \cite{arreche:2012} for a detailed discussion of Dreyfus' results in the setting of one parametric derivation, and \cite{arreche-unipotent} for the computation of the unipotent radical). We complete these algorithms to compute $G$ when $r_1$ is not necessarily zero. Algorithms for higher-order equations are developed in~\cite{minchenko-ovchinnikov-singer:2013a,minchenko-ovchinnikov-singer:2013b}.

After performing a change of variables on \eqref{original}, we obtain an associated equation \eqref{unimodular} of the form $\delta_x^2Y-qY=0$, whose $\mathrm{PPV}$-group $H$ is already known \cite{arreche-unipotent,dreyfus:2011}. In \S\ref{recover}, we reinterpret this change-of-variables procedure in terms of a lattice \eqref{lattice} of $\mathrm{PPV}$-fields. We recover the original $\mathrm{PPV}$-group $G$ from this lattice in Proposition~\ref{product}, which is a formal consequence of the parameterized Galois correspondence \cite[Thm.~3.5]{cassidy-singer:2006}. This reinterpretation, whose non-parameterized analogue is probably well-known to the experts in classical Picard-Vessiot theory, comprises a \emph{theoretical} procedure to recover the $\mathrm{PPV}$-group of \eqref{original} from these data.

In \S\ref{explicit}, the main tools leading to the explicit computation of $G$ obtained in Theorem~\ref{main2} are Proposition~\ref{product} and the Kolchin-Ostrowski Theorem \cite{kolchin:1968}. This strategy for computing $G$ was already sketched in \cite[\S3.4]{arreche:2012}, in the setting of one parametric derivation. The results here are sharper than those of \cite{arreche:2012}, and the proofs are conceptually simpler.

In \S\ref{examples}, we apply Theorem~\ref{main2} and the results of \cite{arreche-unipotent,dreyfus:2011} to compute the $\mathrm{PPV}$-group corresponding to a concrete parameterized second-order linear differential equation \eqref{egeq}.


\section{Preliminaries}\label{preliminaries}
We refer to \cite{kolchin:1976,vanderput-singer:2003} for more details concerning the following definitions. Every field considered in this work is assumed to be of characteristic zero. A ring $R$ equipped with a finite set $\Delta:=\{\delta_1,\dots,\delta_m\}$ of pairwise commuting derivations (i.e., $\delta_i(ab)=a\delta_i (b)+ \delta_i( a)b$ and $\delta_i\delta_j=\delta_j\delta_i$ for each $a,b\in K$ and $1\leq i,j \leq m$) is called a $\Delta$\emph{-ring}. If $R=K$ happens to be a field, we say that $(K,\Delta)$ is a $\Delta$-field. We often omit the parentheses, and simply write $\delta a$ for $\delta(a)$. For $\Pi\subseteq\Delta$, we denote the subring of $\Pi$\emph{-constants} of $R$ by $R^\Pi:=\{a\in K \ | \ \delta a=0, \ \delta\in\Pi\}$. When $\Pi=\{\delta\}$ is a singleton, we write $R^\delta$ instead of $R^\Pi$.

The \emph{ring of differential polynomials} over $K$ (in $m$ differential indeterminates) is denoted by $K\{Y_1,\dots,Y_m\}_\Delta$. As a ring, it is the free $K$-algebra in the countably infinite set of variables \begin{gather*} \{\theta Y_i \ | \ 1\leq i \leq m, \ \theta\in\Theta\}; \quad\text{where} \\ \Theta:=\{\delta_1^{r_1}\dots\delta_n^{r_n} \ | \ r_i\in\mathbb{Z}_{\geq 0} \ \ \text{for} \ \ 1\leq i \leq n\} \end{gather*} is the free commutative monoid on the set $\Delta$. The ring $K\{Y_1,\dots,Y_m\}_\Delta$ carries a natural structure of $\Delta$-ring, given by $\delta_i(\theta Y_j):=(\delta_i\cdot\theta)Y_j$. We say $\mathbf{p}\in K\{Y_1,\dots,Y_m\}_\Delta$ is a \emph{linear differential polynomial} if it belongs to the $K$-vector space spanned by the $\theta Y_j$, for $\theta\in\Theta$ and $1\leq j \leq m$. The $K$-vector space of linear differential polynomials will be denoted by $K\{Y_1,\dots,Y_m\}_\Delta^1$. The \emph{ring of linear differential operators} $K[\Delta]$ is the $K$-span of $\Theta$, and its (non-commutative) ring structure is defined by composition of additive endomorphisms of $K$.

If $M$ is a $\Delta$-field and $K$ is a subfield such that $\delta(K)\subset K$ for each $\delta\in \Delta$, we say $K$ is a $\Delta$\emph{-subfield} of $M$ and $M$ is a $\Delta$\emph{-field extension} of $K$. If $y_1,\dots,y_n\in M$, we denote the $\Delta$-subfield of $M$ generated over $K$ by all the derivatives of the $y_i$ by $K\langle y_1,\dots,y_n\rangle_\Delta$.

We say that a $\Delta$-field $K$ is $\Delta$\emph{-closed} if every system of polynomial differential equations defined over $K$ that admits a solution in some $\Delta$-field extension of $K$ already has a solution in $K$. This last notion is discussed at length in \cite{kolchin:1974}. See also \cite{cassidy-singer:2006, trushin:2010}.

We now briefly recall the main facts that we will need from the parameterized Picard-Vessiot theory \cite{cassidy-singer:2006} and the theory of linear differential algebraic groups \cite{cassidy:1972,kolchin:1985}. Let $F$ be a $\Pi$-field, where $\Pi:=\{\partial_1,\dots,\partial_m\}$, and let $K:=F(x)$ be the field of rational functions in $x$ with coefficients in $F$, equipped with the structure of $(\{\delta_x\}\cup\Pi)$-field determined by setting $\delta_xx=1$, $K^{\delta_x}=F$, and $\partial_ix=0$ for each $i$. We will sometimes refer to $\delta_x$ as the \emph{main} derivation, and to $\Pi$ as the set of \emph{parametric} derivations. From now on, we will let $\Delta:=\{\delta_x\}\cup\Pi$. Consider the following linear differential equation with respect to the main derivation, where $r_i\in K$ for each $0\leq i\leq n-1$:\begin{equation}\label{ppv-eq} \delta_x^nY+\sum_{i=0}^{n-1}r_i\delta_x^iY=0.\end{equation}

\begin{defn}  We say that a $\Delta$-field extension $M\supseteq K$ is a \emph{parameterized Picard-Vessiot extension} (or $\mathrm{PPV}$-extension) of $K$ for~\eqref{ppv-eq} if:
\begin{enumerate}[(i)]
\item There exist $n$ distinct, $F$-linearly independent elements $y_1,\dots, y_n\in M$ such that $\delta_x^ny_j+\sum_ir_i\delta_x^iy_j=0$ for each $1\leq j \leq n$.
\item $M=K\langle y_1,\dots y_n\rangle_\Delta$.
\item $M^{\delta_x}=K^{\delta_x}$.
\end{enumerate}

The \emph{parameterized Picard-Vessiot group} (or $\mathrm{PPV}$-group) is the group of $\Delta$-automorphisms of $M$ over $K$, and we denote it by $\mathrm{Gal}_\Delta(M/K)$. The $F$-linear span of all the $y_j$ is the \emph{solution space} $\mathcal{S}$. \end{defn} 

If $F$ is $\Pi$-closed, it is shown in \cite{cassidy-singer:2006} that a $\mathrm{PPV}$-extension of $K$ for \eqref{ppv-eq} exists and is unique up to $K$-$\Delta$-isomorphism. Although this assumption allows for a simpler exposition of the theory, several authors \cite{gill-gor-ov:2012, wibmer:2011} have shown that, in many cases of practical interest, the parameterized Picard-Vessiot theory can be developed without assuming that $F$ is $\Pi$-closed. In any case, we may always embed $F$ in a $\Pi$-closed field \cite{kolchin:1974,trushin:2010}. The action of $\mathrm{Gal}_\Delta(M/K)$ is determined by its restriction to $\mathcal{S}$, which defines an embedding $\mathrm{Gal}_\Delta(M/K)\hookrightarrow\mathrm{GL}_n(F)$ after choosing an $F$-basis for $\mathcal{S}$. It is shown in \cite{cassidy-singer:2006} that this embedding identifies the $\mathrm{PPV}$-group with a linear differential algebraic group (Definition~\ref{ldag-def}), and from now on we will make this identification implicitly.

\begin{defn} \label{ldag-def}Let $F$ be a $\Pi$-closed field. We say that a subgroup $G\subseteq \mathrm{GL}_n(F)$ is a \emph{linear differential algebraic group} if $G$ is defined as a subset of $\mathrm{GL}_n(F)$ by the vanishing of a system of polynomial differential equations in the matrix entries, with coefficients in $F$. We say that $G$ is $\Pi$\emph{-constant} if $G\subseteq \mathrm{GL}_n(F^\Pi)$.\end{defn}

There is a parameterized Galois correspondence \cite[Thm. 3.5]{cassidy-singer:2006} between the linear differential algebraic subgroups $\Gamma$ of $\mathrm{Gal}_\Delta(M/K)$ and the intermediate $\Delta$-fields $K\subseteq L\subseteq M$, given by $\Gamma\mapsto M^\Gamma$ and $L\mapsto \mathrm{Gal}_\Delta(M/L)$. Under this correspondence, an intermediate $\Delta$-field $L$ is a $\mathrm{PPV}$-extension of $K$ (for some linear differential equation with respect to $\delta_x$) if and only if $\mathrm{Gal}_\Delta(M/L)$ is normal in $\mathrm{Gal}_\Delta(M/K)$. The restriction homomorphism $\mathrm{Gal}_\Delta(M/K)\rightarrow\mathrm{Gal}_\Delta(L/K)$ defined by $\sigma\mapsto\sigma|_L$ is surjective, with kernel $\mathrm{Gal}_\Delta(M/L)$.

The differential algebraic subgroups of the additive and multiplicative groups of $F$, which we denote respectively by $\mathbb{G}_a(F)$ and $\mathbb{G}_m(F)$, were classified by Cassidy in \cite[Prop. 11, Prop. 31 and its Corollary]{cassidy:1972}:

\begin{prop}[(Cassidy)] \label{classification} If $B\leq \mathbb{G}_a(F)$ is a differential algebraic subgroup, then there exist finitely many linear differential polynomials $\mathbf{p}_1,\dots,\mathbf{p}_s\in F\{Y\}_\Pi^1$ such that $$B=\{b\in\mathbb{G}_a(F) \ | \ \mathbf{p}_i(b)=0 \ \text{for each} \ 1\leq i \leq s\}.$$

If $A\leq \mathbb{G}_m(F)$ is a differential algebraic subgroup, then either $A=\mu_\ell$, the group of $\ell^\text{th}$ roots of unity, or else $\mathbb{G}_m(F^\Pi)\subseteq A$, and there exist finitely many linear differential polynomials $\mathbf{q}_1,\dots,\mathbf{q}_s\in  F\{Y_1,\dots, Y_m\}_\Pi^1$ such that $$A=\left\{a\in\mathbb{G}_m(F) \ \middle| \ \mathbf{q}_{i}\bigl(\tfrac{\partial_1a}{a}, \dots, \tfrac{\partial_ma}{a}\bigr)=0 \ \text{for} \ 1 \leq i \leq s\right\}.$$
\end{prop}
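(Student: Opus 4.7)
My strategy is to reconstruct Cassidy's classification from \cite{cassidy:1972}, handling the additive and multiplicative cases in turn.

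For the additive case, let $B \leq \mathbb{G}_a(F)$ be a differential algebraic subgroup, cut out by a radical $\Pi$-differential ideal $I \subseteq F\{Y\}_\Pi$. By the Ritt--Raudenbush basis theorem, $I$ is the radical of a finitely generated differential ideal. The key geometric input is the subgroup condition: pulling back along the addition map $\mathbb{G}_a \times \mathbb{G}_a \to \mathbb{G}_a$, one obtains for every $\mathbf{p} \in I$ that $\mathbf{p}(Y_1+Y_2)$ vanishes on $B \times B$, hence lies in the differential ideal generated in $F\{Y_1,Y_2\}_\Pi$ by the evaluations $\mathbf{p}(Y_1)$ and $\mathbf{p}(Y_2)$ together with the pulled-back defining relations. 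Decomposing $\mathbf{p} = \sum_d \mathbf{p}_d$ into homogeneous components in the natural grading on $F\{Y\}_\Pi$ and exploiting translation invariance (replacing $Y$ by $Y+c$ for $c \in B$), one argues by induction on degree that $\mathbf{p}_d \in I$ separately for each $d \geq 2$. Hence $I$ is generated by its degree-one part, producing the linear differential polynomials $\mathbf{p}_1, \ldots, \mathbf{p}_s$.

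For the multiplicative case, consider the $\Pi$-logarithmic derivative $\lambda : \mathbb{G}_m(F) \to \mathbb{G}_a(F)^m$ defined by $a \mapsto (\partial_1 a/a, \ldots, \partial_m a/a)$. This is a differential group homomorphism with kernel $\mathbb{G}_m(F^\Pi)$. For a differential algebraic subgroup $A \leq \mathbb{G}_m(F)$, the intersection $A \cap \mathbb{G}_m(F^\Pi)$ is an algebraic subgroup of $\mathbb{G}_m(F^\Pi)$; since $F^\Pi$ is algebraically closed, it is either $\mu_\ell$ for some $\ell$ or all of $\mathbb{G}_m(F^\Pi)$. In the first subcase, a connectedness argument in the Kolchin topology shows that any element of $A$ outside $\mathbb{G}_m(F^\Pi)$ would force new constants into $A \cap \mathbb{G}_m(F^\Pi)$, a contradiction, so $A = \mu_\ell$. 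In the second subcase, $A$ is the full $\lambda$-preimage of $\lambda(A)$, which by the additive case applied componentwise is cut out by linear differential polynomials $\mathbf{q}_1, \ldots, \mathbf{q}_s \in F\{Y_1,\ldots,Y_m\}_\Pi^1$, yielding the stated description.

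The main obstacle is the linearization step in the additive case: eliminating all higher-degree monomials from the defining ideal requires a careful combination of translation invariance by elements of $B$ with the differential structure on $F\{Y\}_\Pi$. Once this is in hand, the multiplicative case reduces formally via the logarithmic-derivative exact sequence together with the elementary classification of algebraic subgroups of $\mathbb{G}_m$ over an algebraically closed field.
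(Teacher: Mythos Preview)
The paper does not give its own proof of this proposition; it is stated with attribution and a citation to \cite[Prop.~11, Prop.~31 and its Corollary]{cassidy:1972}. So there is nothing in the paper to compare your argument against beyond that bare reference.

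Your outline follows the broad shape of Cassidy's arguments, but as written it has real gaps. In the additive case you assert that translation invariance together with induction on degree forces each homogeneous component $\mathbf{p}_d$ of an element $\mathbf{p}\in I$ to lie in $I$, but you do not indicate how the induction step actually runs. The difference $\mathbf{p}(Y+c)-\mathbf{p}(Y)$ for $c\in B$ does drop the degree by one, but what you recover in top degree is only the directional derivative of $\mathbf{p}_d$ along $c$, not $\mathbf{p}_d$ itself; extracting $\mathbf{p}_d$ from the family of such derivatives as $c$ ranges over $B$ requires either a density/Vandermonde argument or, as in Cassidy's treatment, an identification of $B$ with its Lie algebra. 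In the multiplicative case the sentence ``a connectedness argument in the Kolchin topology shows that any element of $A$ outside $\mathbb{G}_m(F^\Pi)$ would force new constants into $A\cap\mathbb{G}_m(F^\Pi)$'' is precisely the nontrivial content of Cassidy's Proposition~31 and is not explained: it is not clear, from what you wrote, why the existence of a non-$\Pi$-constant element of $A$ should enlarge the $\Pi$-constant part. One correct route is to show that the identity component $A^0$, if nontrivial, already contains $\mathbb{G}_m(F^\Pi)$, using the surjectivity of the logarithmic derivative $\mathbb{G}_m(F)\to\mathbb{G}_a(F)^m$ together with the additive classification applied to $\lambda(A^0)$; your sketch gestures at this exact sequence but does not carry out the step.
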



\section{Recovering the original group}\label{recover}

Recall that $K:=F(x)$ is the $\Delta$-field defined by: $F=K^{\delta_x}$ is $\Pi$-closed field, $\Delta:=\{\delta_x\}\cup\Pi$, $\delta_xx=1$, and $\partial x=0$ for each $\partial\in\Pi$. Consider a second-order parameterized linear differential equation \begin{equation}\label{unimodular} \delta_x^2Y-qY=0,\end{equation} where $q\in K$. In \cite{dreyfus:2011}, Dreyfus develops the following procedure to compute the $\mathrm{PPV}$-group $H$ corresponding to \eqref{unimodular} (see also \cite{arreche:2012,arreche-unipotent}). As in Kovacic's algorithm \cite{kovacic:1986}, one first decides whether there exists $u\in\bar{K}$ such that \begin{equation}\label{factorization}(\delta_x +u)\circ (\delta_x-u)=\delta_x^2-q,\end{equation} where $\bar{K}$ is an algebraic closure of $K$. Expanding the left-hand side of \eqref{factorization} shows that such a factorization exists precisely when one can find a solution in $\bar{K}$ to the \emph{Riccati equation} $P_q(u)=\delta_xu+u^2-q=0$. One can deduce structural properties of $H$ from the algebraic degree of such a $u$ over $K$ \cite{kovacic:1986}. By \cite[Thm.~2.10]{dreyfus:2011}, precisely one of the following possibilities occurs.

\begin{enumerate}[I.]
\item \label{borel} If there exists $u\in K$ such that $P_q(u)=0$, then there exist differential algebraic subgroups $A\leq \mathbb{G}_m(F)$ and $B\leq \mathbb{G}_a(F)$ such that $H$ is conjugate to \begin{equation}\label{borel-eqn}\left\{\begin{pmatrix} a & b \\ 0 & a^{-1}\end{pmatrix} \ \middle| \ a\in A, \ b\in B\right\}.\end{equation}

\item \label{dihedral} If there exists $u\in\bar{K}$, of degree $2$ over $K$, such that $P_q(u)=0$, then there exists a differential algebraic subgroup $A\leq\mathbb{G}_m(F)$ such that $H$ is conjugate to $$\left\{\begin{pmatrix}   a & 0 \\ 0 & a^{-1}\end{pmatrix} \ \middle| \ a\in A\right\}\cup\left\{\begin{pmatrix} 0 & -a \\ a^{-1} & 0  \end{pmatrix} \ \middle| \ a\in A\right\}.$$

\item \label{finite}If there exists $u\in\bar{K}$, of degree $4$, $6$, or $12$ over $K$, such that $P_q(u)=0$, then $H$ is one of the finite primitive groups $A_4^{\mathrm{SL}_2}$, $S_4^{\mathrm{SL}_2}$, or $A_5^{\mathrm{SL}_2}$, respectively (see \cite[\S4]{singer-ulmer:1993}).

\item \label{full-sl2} If there is no $u$ in $\bar{K}$ such that $P_q(u)=0$, then there exists a subset $\Pi'\subset F\cdot\Pi$ consisting of $F$-linearly independent, pairwise commuting derivations $\partial'$ such that $H$ is conjugate to $\mathrm{SL}_2(F^{\Pi'})$.

\end{enumerate} 

The computation of $A$ in cases \ref{borel} and \ref{dihedral} is explained in \cite{dreyfus:2011}. When $A\subseteq \mathbb{G}(F^\Pi)$ is $\Pi$-constant, an effective procedure to compute $B$ in case~\ref{borel} is given in \cite[\S2, p.7]{dreyfus:2011}. This procedure is extended to the case when $A$ is not necessarily $\Pi$-constant in \cite{arreche-unipotent}. The computation of $G$ in case \ref{finite} is reduced to Picard-Vessiot theory \cite[Prop.~3.6(2)]{cassidy-singer:2006}, and the algorithms of \cite{singer-ulmer:1993} can be used to compute $G$ in this case. In case \ref{full-sl2}, for any $\partial\in\mathcal{D}:=F\cdot\Pi$ it is shown in \cite[Prop.~2.8]{dreyfus:2011} that $H$ is conjugate to a subgroup of $\mathrm{SL}_2(F^\partial)$ if and only if a certain $3^\text{rd}$-order inhomogeneous linear differential equations with respect to $\delta_x$ admits a solution in $K$, which can be decided effectively (see \cite[Prop.~2.24]{vanderput-singer:2003} and \cite[\S3]{singer:1981}). The results of \cite{gor-ov:2012} reduce the problem for general $\Pi$ to the one-parameter situation.

We now apply these results to compute the $\mathrm{PPV}$-group $G$ corresponding to \begin{equation}\label{noriginal}\delta_x^2Y-2r_1\delta_xY+r_0Y=0,\end{equation} where $r_1,r_2\in K$, and $r_1$ is not necessarily zero. The harmless normalization $-2r_1$ instead of $r_1$ will spare us the eyesore of a ubiquitous factor of $-\tfrac{1}{2}$ in what follows.

The solutions for \eqref{noriginal} are related to the solutions of an associated unimodular equation by a classical change of variables. Letting $q:=r_1^2-\delta_xr_1-r_0$, let $M$ denote a $\mathrm{PPV}$-extension of $K$ for \eqref{unimodular}, so $H=\mathrm{Gal}_\Delta(M/K)$ is the corresponding $\mathrm{PPV}$-group, which we assume has already been computed by \cite{dreyfus:2011,arreche-unipotent}. 

Let $\{\eta,\xi\}$ denote a basis for the solution space of \eqref{unimodular}, and let $U$ denote a $\mathrm{PPV}$-extension of $M$ for \begin{equation} \label{determinant} \delta_xY-r_1Y=0,\end{equation} and choose $0\neq \zeta\in U$ such that $\delta_x\zeta=r_1\zeta$. A computation shows that $\{\zeta\eta,\zeta\xi\}$ is an $F$-basis for the solution space of \eqref{noriginal}, whence $E:=K\langle\zeta\eta,\zeta\xi\rangle_\Delta\subseteq U$ is a $\mathrm{PPV}$-extension of $K$ for \eqref{noriginal}, and its $\mathrm{PPV}$-group is $G=\mathrm{Gal}_\Delta(E/K)$.

Letting $N:=K\langle\zeta\rangle_\Delta\subseteq U$, we see that $N$ is a $\mathrm{PPV}$-extension of $K$ for \eqref{determinant}, and we denote its $\mathrm{PPV}$-group by $D$ (the mnemonic is ``determinant''). The computation of $D$ is analogous to that of $A$ in case~\ref{borel} (see \cite{dreyfus:2011}). Finally, let $R:=M\cap N\subseteq U$. Since $D\leq \mathbb{G}_m(F)$ is abelian, $\mathrm{Gal}_\Delta(N/R)\leq D$ is normal, and therefore $R$ is a $\mathrm{PPV}$-extension of $K$, with $\mathrm{PPV}$-group denoted by $\Lambda$. We obtain the following lattice of $\mathrm{PPV}$-extensions and $\mathrm{PPV}$-groups.

\begin{equation}\label{lattice} \xymatrix{ 
& & & U \ar@{-}[dl] \ar@{-}[dr]  \ar@/_/@{-}[dlll]_{\mu_\epsilon}\\
 E \ar@/_/@{-}[ddrrr]_(.427)G  & & N \ar@{-}[dr] \ar@/_/@{-}[ddr]_D & & M \ar@{-}[dl]  \ar@/^/@{-}[ddl]^H \\
& & & R \ar@{-}[d]^(.4)\Lambda \\ 
& & & K
} \qquad \qquad \smash{\begin{matrix}\\ \\ \\ \\ \\ \\ \\ \\ \text{where:} \\ \bullet \ E \ \text{is a} \ \mathrm{PPV}\text{-extension of} \ K \ \ \! \text{for \eqref{noriginal}}; \\ \bullet \ N \ \text{is a} \ \mathrm{PPV}\text{-extension of} \ K \ \ \! \text{for \eqref{determinant}}; \\ \bullet \  M \ \text{is a} \ \mathrm{PPV}\text{-extension of} \ K \ \text{for \eqref{unimodular}}; \\ \bullet \  U \ \text{is a} \ \mathrm{PPV}\text{-extension of} \ M \ \text{for \eqref{determinant}}; \\ \bullet \  R=M\cap N \ \ \! \text{is a} \ \mathrm{PPV}\text{-extension of} \ K.\end{matrix}}\end{equation}

\begin{rem}\label{uppv}In fact, $U=K\langle\zeta\eta,\zeta\xi,\zeta\rangle_\Delta$ is a $\mathrm{PPV}$-extension of $K$ for \begin{equation}\label{large} \delta_x^3Y-\bigl(3r_1+\tfrac{\delta_xq}{q}\bigr)\delta_x^2Y+\bigl(2r_1^2-2\delta_xr_1+r_0+2r_1\tfrac{\delta_xq}{q}\bigr)\delta_xY +\bigl(\delta_xr_0-r_1r_0-r_0\tfrac{\delta_xq}{q}\bigr)Y=0. \end{equation}To verify that each of $\zeta\eta$, $\zeta\xi$, and $\zeta$ satisfies \eqref{large}, note that $$\delta_x^2\zeta-2r_1\delta_x\zeta+r_0\zeta=-q\zeta,$$ and expand the following product in $K[\delta_x]$ to obtain the operator in \eqref{large}: $$\bigl(\delta_x-r_1-\tfrac{\delta_xq}{q}\bigr)\circ(\delta_x^2-2r_1\delta_x+r_0).$$ Let $\Gamma:=\mathrm{Gal}_\Delta(U/K)$ be the $\mathrm{PPV}$-group of $U$ over $K$. The choice of $\Delta$-field generators $\{\eta,\xi,\zeta\}$ for $U$ over $K$ produces the following embedding of $\Gamma$ in $\mathrm{GL}_3(F)$:\begin{align}\label{gamma-embedding}\gamma(\eta)&=a_\gamma\eta+c_\gamma\xi; \nonumber \\ \gamma\mapsto\smash{\begin{pmatrix} a_\gamma & b_\gamma & 0 \\ c_\gamma & d_\gamma & 0 \\ 0 & 0 & e_\gamma \end{pmatrix}}, \qquad \text{where} \qquad\, \gamma(\xi)&=b_\gamma\eta+d_\gamma\xi; \\ \gamma(\zeta)&=e_\gamma\zeta. \nonumber \end{align} Embedding $G$ in $\mathrm{GL}_2(F)$ by means of the basis $\{\zeta\eta,\zeta\xi\}$, the surjection $\Gamma\twoheadrightarrow G$ is then given by  $\gamma\mapsto e_\gamma\cdot \bigl(\begin{smallmatrix}  a_\gamma & b_\gamma \\  c_\gamma &  d_\gamma   \end{smallmatrix}\bigr)$, and therefore\begin{equation}\label{gammatog}  G\simeq \left\{\begin{pmatrix} e_\gamma a_\gamma & e_\gamma b_\gamma \\ e_\gamma c_\gamma & e_\gamma d_\gamma   \end{pmatrix} \ \middle| \ \gamma\in\Gamma\right\}.\end{equation}\end{rem}

Our next task is to apply the parameterized Galois correspondence \cite[Thm.~3.5]{cassidy-singer:2006} to the lattice \eqref{lattice} to compute $\Gamma$, and therefore $G$, in terms of $H$ and $D$. The arguments are familiar from classical Galois theory.

\begin{lem} \label{butterfly} The restriction homomorphisms \begin{align*} \mathrm{Gal}_\Delta(U/M) &\rightarrow\mathrm{Gal}_\Delta(N/R);  \qquad \text{and} \\
\mathrm{Gal}_\Delta(U/N) &\rightarrow\mathrm{Gal}_\Delta(M/R), 
\end{align*} defined respectively by $\gamma\mapsto \gamma|_N$ and $\gamma\mapsto \gamma|_M$,  are isomorphisms.
\end{lem}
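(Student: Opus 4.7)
The plan is to prove the first isomorphism only; the second follows by the identical argument with $M$ and $N$ interchanged, since the lattice \eqref{lattice} is symmetric under this exchange. The proof proceeds by direct application of the parameterized Galois correspondence \cite[Thm.~3.5]{cassidy-singer:2006}, in three steps: verifying that the restriction $\rho\colon\gamma\mapsto\gamma|_N$ is well-defined, establishing injectivity via the compositum identity $MN=U$, and deducing surjectivity from a fixed-field computation.

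I would first check that $\rho$ lands in $\mathrm{Gal}_\Delta(N/R)$. Since $N$ is a $\mathrm{PPV}$-extension of $K$ sitting inside $U$, every $K$-$\Delta$-automorphism of $U$ must carry the solution space of \eqref{determinant} into itself, and hence preserves $N$; moreover, each $\gamma\in\mathrm{Gal}_\Delta(U/M)$ fixes $R\subseteq M$, so $\gamma|_N\in\mathrm{Gal}_\Delta(N/R)$. For injectivity, the parameterized Galois correspondence gives
\[\ker\rho=\mathrm{Gal}_\Delta(U/M)\cap\mathrm{Gal}_\Delta(U/N)=\mathrm{Gal}_\Delta(U/MN).\]
Since $\eta,\xi\in M$ and $\zeta\in N$, the three $\Delta$-generators $\zeta\eta,\zeta\xi,\zeta$ of $U$ over $K$ all lie in the compositum $MN$, so $MN=U$ and $\ker\rho$ is trivial.

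For surjectivity, let $I\subseteq\mathrm{Gal}_\Delta(N/R)$ denote the image of $\rho$. The core computation is $N^I=R$: any $n\in N^I$, viewed as an element of $U$, is fixed by every $\gamma\in\mathrm{Gal}_\Delta(U/M)$, so $n\in U^{\mathrm{Gal}_\Delta(U/M)}=M$ by the Galois correspondence applied to $U/M$; together with $n\in N$, this forces $n\in M\cap N=R$. Applying the Galois correspondence to the $\mathrm{PPV}$-extension $N/R$, the identity $N^I=R$ then yields $I=\mathrm{Gal}_\Delta(N/R)$.

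The main technical obstacle I anticipate is the final step: in order to translate $N^I=R$ into $I=\mathrm{Gal}_\Delta(N/R)$ via the Galois correspondence, one needs $I$ to be Kolchin closed in $\mathrm{Gal}_\Delta(N/R)$. This is the differential-algebraic analogue of the classical fact that morphisms of linear algebraic groups have closed images, and should be available from the general theory developed in \cite{cassidy:1972,kolchin:1985}; if this closedness is not cited directly, one can first conclude only that the Kolchin closure $\bar I$ equals $\mathrm{Gal}_\Delta(N/R)$ by the same fixed-field computation, and then deduce $I=\bar I$ from the fact that $\rho$ is itself a morphism of linear differential algebraic groups.
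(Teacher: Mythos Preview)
Your proposal is correct and follows essentially the same route as the paper: injectivity from $U=M\cdot N$, and surjectivity by computing that the fixed field of the image equals $R$ and invoking the Galois correspondence \cite[Thm.~3.5]{cassidy-singer:2006}. The technical point you flag---Kolchin-closedness of the image---is simply asserted in the paper without further justification, so your discussion of it is, if anything, more careful than the original.
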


\begin{proof} Since $U=M\cdot N$, these homomorphisms are injective. Since the image of $\mathrm{Gal}_\Delta(U/M)$ in $\mathrm{Gal}_\Delta(N/R)$ is Kolchin-closed, its fixed field $R'$ is an intermediate $\Delta$-field extension $R\subseteq R'\subseteq N$. Since every $f\in R'$ is fixed by every $\gamma\in\mathrm{Gal}_\Delta(U/M)$, it follows that $f\in M$, whence $f\in R$. By \cite[Thm.~3.5]{cassidy-singer:2006}, the image of $\mathrm{Gal}_\Delta(U/M)$ must be all of $\mathrm{Gal}_\Delta(N/R)$. The surjectivity of $\mathrm{Gal}_\Delta(U/N)\rightarrow\mathrm{Gal}_\Delta(M/R)$ is proved analogously.\end{proof}

\begin{prop} \label{product}The canonical homomorphism $$\Gamma\rightarrow H\times_\Lambda D:=\{(\sigma,\tau)\in H\times D \ | \ \sigma|_R=\tau|_R\},$$ given by $\gamma\mapsto (\gamma|_M,\gamma|_N)$, is an isomorphism.
\end{prop}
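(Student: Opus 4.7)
The plan is to verify the three standard properties (well-definedness, injectivity, surjectivity) of the canonical homomorphism $\Gamma\to H\times_\Lambda D$, with surjectivity being the only nontrivial step, which will be reduced to Lemma~\ref{butterfly}.

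First I will check that the map $\gamma\mapsto(\gamma|_M,\gamma|_N)$ is well-defined. Since both $M$ and $N$ are $\mathrm{PPV}$-extensions of $K$, the parameterized Galois correspondence \cite[Thm.~3.5]{cassidy-singer:2006} implies that $\mathrm{Gal}_\Delta(U/M)$ and $\mathrm{Gal}_\Delta(U/N)$ are normal in $\Gamma$, so restriction yields surjective homomorphisms $\Gamma\twoheadrightarrow H$ and $\Gamma\twoheadrightarrow D$. Because $R\subseteq M\cap N$, both $(\gamma|_M)|_R$ and $(\gamma|_N)|_R$ agree with $\gamma|_R\in\Lambda$, so the image lies in the fiber product.

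For injectivity, I will use that $U=M\cdot N$, which follows from the description $U=K\langle\zeta\eta,\zeta\xi,\zeta\rangle_\Delta$ together with the fact that $M=K\langle\eta,\xi\rangle_\Delta$ and $N=K\langle\zeta\rangle_\Delta$: the compositum $M\cdot N$ contains $\zeta$, $\eta$, and $\xi$, hence also $\zeta\eta$ and $\zeta\xi$ along with all their $\Delta$-derivatives. Consequently, any $\gamma\in\Gamma$ whose restrictions to $M$ and to $N$ are the identity must itself be the identity.

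The surjectivity is the main step. Given $(\sigma,\tau)\in H\times_\Lambda D$, I will first lift $\sigma$ to some $\gamma_0\in\Gamma$ using the surjectivity of $\Gamma\twoheadrightarrow H$, and then correct $\gamma_0$ by an element of $\mathrm{Gal}_\Delta(U/M)$ so that its restriction to $N$ becomes $\tau$. The element $\rho:=(\gamma_0|_N)^{-1}\circ\tau$ lies in $\mathrm{Gal}_\Delta(N/R)$: indeed, its restriction to $R$ is $(\sigma|_R)^{-1}\circ(\tau|_R)$, which is the identity by the fiber-product condition. By Lemma~\ref{butterfly}, the restriction $\mathrm{Gal}_\Delta(U/M)\xrightarrow{\sim}\mathrm{Gal}_\Delta(N/R)$ is an isomorphism, so there exists a unique $\gamma_1\in\mathrm{Gal}_\Delta(U/M)$ with $\gamma_1|_N=\rho$. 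Then $\gamma:=\gamma_0\circ\gamma_1$ satisfies $\gamma|_M=\sigma$ (since $\gamma_1$ fixes $M$) and $\gamma|_N=\gamma_0|_N\circ\rho=\tau$, producing the required preimage. The main conceptual obstacle is precisely this lifting argument, but Lemma~\ref{butterfly} packages exactly the isomorphism needed.
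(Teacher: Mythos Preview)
Your proof is correct and follows essentially the same approach as the paper: injectivity via $U=M\cdot N$, and surjectivity via Lemma~\ref{butterfly}. The only cosmetic difference is that the paper first lifts the common restriction $\lambda\in\Lambda$ and then invokes both isomorphisms of Lemma~\ref{butterfly}, whereas you lift $\sigma$ directly and need only the isomorphism $\mathrm{Gal}_\Delta(U/M)\xrightarrow{\sim}\mathrm{Gal}_\Delta(N/R)$; this is a slight economy but not a genuinely different route.
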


\begin{proof} Injectivity follows from the fact that $U=M\cdot N$. To establish surjectivity, let $\sigma\in H$ and $\tau\in D$ be such that $\sigma|_R=\tau|_R=:\lambda\in\Lambda$. Now choose $\tilde{\lambda}\in\Gamma$ such that $\tilde{\lambda}|_R=\lambda$, and define elements \begin{align*}\sigma': &=\sigma\circ\tilde{\lambda}|_M^{-1}\in\mathrm{Gal}_\Delta(M/R); \quad\text{and} \\ \tau': &=\tau\circ\tilde{\lambda}|_N^{-1}\in\mathrm{Gal}_\Delta(N/R).\end{align*} By Lemma~\ref{butterfly}, there exist $\tilde{\sigma}\in\mathrm{Gal}_\Delta(U/N)$ and $\tilde{\tau}\in\mathrm{Gal}_\Delta(U/M)$ such that $\tilde{\sigma}|_M=\sigma'$ and $\tilde{\tau}|_N=\tau'$. A computation shows that $\gamma:=\tilde{\sigma}\circ\tilde{\tau}\circ\tilde{\lambda}\in\Gamma$ satisfies $\gamma|_M=\sigma$ and $\gamma|_N=\tau$.
\end{proof}

\begin{cor}\label{lambda-trivial}The $\mathrm{PPV}$-group $\Lambda=\{1\}$ if and only if $\Gamma\simeq H\times D$. In this case, $$G\simeq\left\{\begin{pmatrix} ea & eb \\ ec & ed   \end{pmatrix} \ \middle| \ \begin{pmatrix}  a & b \\ c & d  \end{pmatrix}\in H, \ e\in D\right\}.$$\end{cor}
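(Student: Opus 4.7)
The plan is to deduce both assertions directly from Proposition~\ref{product} together with the explicit description of $G$ in terms of $\Gamma$ provided by Remark~\ref{uppv}. I would first treat the equivalence $\Lambda=\{1\} \iff \Gamma \simeq H \times D$, and then use the block shape of the embedding \eqref{gamma-embedding} to extract the matrix formula for $G$.

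For the forward direction, I would argue as follows. If $\Lambda=\{1\}$, the parameterized Galois correspondence (applied to $R$ as a $\mathrm{PPV}$-extension of $K$ with group $\Lambda$) forces $R=K$. Consequently the restrictions of any $\sigma\in H$ and $\tau\in D$ to $R$ are both the identity, so the fiber product condition $\sigma|_R=\tau|_R$ in the definition of $H\times_\Lambda D$ is automatically satisfied. Hence $H\times_\Lambda D = H\times D$, and Proposition~\ref{product} yields $\Gamma \simeq H\times D$. For the converse, I would exploit the fact that the restriction maps $H\to\Lambda$ and $D\to\Lambda$ are surjective (these are the restriction maps coming from the Galois correspondence). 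If $\Lambda$ were nontrivial, I could choose $\tau\in D$ with $\tau|_R\ne\mathrm{id}_R$ and pair it with $\sigma=1_H$: the pair $(1_H,\tau)$ lies in $H\times D$ but not in $H\times_\Lambda D$, contradicting the assumption that $\Gamma\to H\times D$ (equivalently, $\gamma\mapsto(\gamma|_M,\gamma|_N)$) is surjective.

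For the explicit formula, I assume $\Lambda=\{1\}$, so every element of $\Gamma$ is determined by an independent pair $(\gamma|_M,\gamma|_N)\in H\times D$. Under the embedding \eqref{gamma-embedding}, the restriction $\gamma|_M$ is encoded by the upper-left $2\times 2$ block $\bigl(\begin{smallmatrix} a_\gamma & b_\gamma \\ c_\gamma & d_\gamma\end{smallmatrix}\bigr)\in H$, and $\gamma|_N$ is encoded by the scalar $e_\gamma\in D$. Since these two blocks vary independently as $\gamma$ ranges over $\Gamma$, substituting into the expression \eqref{gammatog} for the surjection $\Gamma\twoheadrightarrow G$ immediately produces the claimed set of matrices.

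The only mildly delicate point is the converse implication, where one must know that the restriction $D\to\Lambda$ is surjective in order to generate a $\tau$ contradicting the fiber-product condition; this is part of the standard Galois correspondence \cite[Thm.~3.5]{cassidy-singer:2006} and causes no real trouble. Everything else is a direct unwinding of Proposition~\ref{product} and Remark~\ref{uppv}.
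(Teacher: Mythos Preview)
Your argument is correct and is exactly the unwinding of Proposition~\ref{product} and Remark~\ref{uppv} that the paper intends; the paper states this corollary without proof, treating it as immediate from those two results. Your handling of the converse via the surjectivity of $D\twoheadrightarrow\Lambda$ is the natural way to see that $H\times_\Lambda D\subsetneq H\times D$ when $\Lambda\neq\{1\}$, and your extraction of the matrix formula from \eqref{gammatog} is precisely what is meant.
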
 

We will now apply Proposition~\ref{product} to compute $G$ in cases \ref{borel}, \ref{dihedral}, \ref{finite}, and \ref{full-sl2}.


\section{Explicit computations}\label{explicit}

In case \ref{borel}, there exists a solution $u\in K$ for the Riccati equation $P_q(u)=0$. We may choose the basis $\{\eta,\xi\}$ for the solution space of \eqref{unimodular} such that $\delta_x\eta=u\eta$ and $\delta_x\bigl(\tfrac{\xi}{\eta}\bigr)=\eta^{-2}$. The embedding $H\hookrightarrow \mathrm{SL}_2(F)$ is then given by the formulae $\sigma(\eta)=a_\sigma\eta$ and $\sigma(\xi)=a_\sigma^{-1}\xi+b_\sigma\eta$ (cf. Remark~\ref{uppv}), and there are differential algebraic subgroups $A\leq \mathbb{G}_m(F)$ and $B\leq \mathbb{G}_a(F)$ such that $H$ is defined by \eqref{borel-eqn} (see \cite{arreche:2012,dreyfus:2011} for more details). We let $\Theta:=\Pi^\mathbb{N}$ denote the free commutative monoid on the set $\Pi$ (see \S\ref{preliminaries}). The $\Delta$-field $L:=K\langle\eta\rangle_\Delta$ is a $\mathrm{PPV}$-extension of $K$ for $\delta_xY-uY=0$, and $A\simeq\mathrm{Gal}_\Delta(L/K)$. We are ready to state the main result of this section.

\begin{thm} \label{main2} In case \ref{borel}, with notation as above, exactly one of the following possibilities holds: \begin{enumerate}[(i)]

\item \label{borel-1} There exist integers $k_1,k_2\in\mathbb{Z}$, with $\mathrm{gcd}(k_1,k_2)$ as small as possible, such that the image group\begin{gather*}  \{a^{k_1} \ | \ a\in A\}\neq \{1\}, \intertext{and such that there exists an element $f\in K$ satisfying}  k_1u-k_2r_1=\tfrac{\delta_xf}{f}.\end{gather*}

\item \label{borel-2} Case \eqref{borel-1} doesn't hold, and there exist linear differential polynomials $\mathbf{p},\mathbf{q}\in F\{Y_1,\dots, Y_m\}_\Pi^1$ such that the image group \begin{gather*} \bigl\{\mathbf{p}\bigl(\tfrac{\partial_1a}{a},\dots,\tfrac{\partial_ma}{a}\bigr) \ \big| \ a\in A\bigr\} \neq \{0\},\intertext{and such that there exists an element $f\in K$ satisfying} \mathbf{p}(\partial_1u,\dots,\partial_mu)-\mathbf{q}(\partial_1r_1,\dots,\partial_mr_1)=\delta_xf.\end{gather*}The $F$-vector space generated by such pairs $(\mathbf{p},\mathbf{q})$ admits a finite $F$-basis $\{(\mathbf{p}_1,\mathbf{q}_1),\dots,(\mathbf{p}_s,\mathbf{q}_s)\}$.

\item \label{borel-3}  There exist linear differential polynomials $\mathbf{p}\in F\{Y\}_\Pi^1$ and $\mathbf{q}\in F\{Y_1,\dots,Y_m\}_\Pi^1$ such that the image group \begin{gather*} \{\mathbf{p}(b) \ | \ b\in B\}\neq \{0\}, \intertext{and such that there exists an element $f\in K$ satisfying} \mathbf{p}(\eta^{-2})-\mathbf{q}\bigl(\partial_1r_1,\dots,\partial_mr_1\bigr)=\delta_xf.\end{gather*} The $F$-vector space generated by such pairs $(\mathbf{p},\mathbf{q})$ admits a finite $F$-basis $\{(\mathbf{p}_1,\mathbf{q}_1),\dots,(\mathbf{p}_s,\mathbf{q}_s)\}$.

\item \label{borel-4} $\Lambda=\{1\}$.
\end{enumerate}

Consequently, in each of these cases $G$ coincides with the subset of matrices in \begin{equation}\protect{\label{g-borel}}\left\{\begin{pmatrix} ea & eb \\ 0 & ea^{-1} \end{pmatrix} \ \middle| \ a\in A, \ b\in B, \ e\in D \right\}\end{equation} that satisfy the corresponding set of conditions below: \begin{enumerate}[(1)]
\item \label{g-borel-1}In case \eqref{borel-1}, $a^{k_1}=e^{k_2}$.
\item \protect{\label{g-borel-2}} In case \eqref{borel-2}, $\mathbf{p}_i\bigl(\tfrac{\partial_1a}{a},\dots,\tfrac{\partial_ma}{a}\bigr)=\mathbf{q}_i\bigl(\tfrac{\partial_1e}{e},\dots,\tfrac{\partial_me}{e}\bigr)$ for $1\leq i \leq s$.
\item \protect{\label{g-borel-3}} In case \eqref{borel-3}, $\mathbf{p}_i(b)=\mathbf{q}_i\bigl(\tfrac{\partial_1e}{e},\dots,\tfrac{\partial_me}{e}\bigr)$ for $1\leq i \leq s$.
\item \protect{\label{g-borel-4}} In case \eqref{borel-4}, there are no further conditions.\end{enumerate}\end{thm}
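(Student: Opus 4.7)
The plan is to apply Proposition~\ref{product} to reduce the computation of $\Gamma$ (and hence of $G$ via \eqref{gammatog}) to that of $\Lambda=\mathrm{Gal}_\Delta(R/K)$, where $R=M\cap N$; since $H$ and $D$ are known by the hypotheses of Case~\ref{borel}, it suffices to determine $\Lambda$ as a common quotient of $H$ and $D$. The strategy is to use the integrability conditions in \eqref{borel-1}--\eqref{borel-3} to exhibit concrete elements of $R\smallsetminus K$, and to read off the defining relations on $\Gamma\subseteq H\times_\Lambda D$ by comparing the two expressions---one from $M$, one from $N$---for $\gamma\in\Gamma$ acting on such elements.

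First I would establish the dictionary between each condition and an element of $R$. Any $\mathbf{p}\in F\{Y_1,\dots,Y_m\}_\Pi^1$ commutes with $\delta_x$, yielding the identities $\mathbf{p}(\partial_i u)=\delta_x\mathbf{p}(\partial_i\eta/\eta)$, $\mathbf{q}(\partial_i r_1)=\delta_x\mathbf{q}(\partial_i\zeta/\zeta)$, and $\mathbf{p}(\eta^{-2})=\delta_x\mathbf{p}(\xi/\eta)$. Thus the condition of \eqref{borel-1} rewrites as $\delta_x(\eta^{k_1}\zeta^{-k_2}f^{-1})=0$, exhibiting $\eta^{k_1}=cf\zeta^{k_2}\in N$ for some $c\in F$; the hypothesis $\{a^{k_1}\mid a\in A\}\neq\{1\}$ forces $\eta^{k_1}\notin K$, so $\eta^{k_1}\in R\smallsetminus K$. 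Cases \eqref{borel-2} and \eqref{borel-3} analogously produce $\mathbf{p}(\partial_\bullet\eta/\eta)\in R\smallsetminus K$ and $\mathbf{p}(\xi/\eta)\in R\smallsetminus K$, respectively. For each such element $w\in R$, the action of $\gamma\in\Gamma$ computed via $\gamma|_M\in H$ (using $\gamma\eta=a_\gamma\eta$ and $\gamma\xi=a_\gamma^{-1}\xi+b_\gamma\eta$) must agree with that computed via $\gamma|_N\in D$ (using $\gamma\zeta=e_\gamma\zeta$); combined with the $F$-linearity of $\gamma$ and its commutativity with each $\partial_j\in\Pi$, the identities $\gamma(\partial_j\eta/\eta)-\partial_j\eta/\eta=\partial_ja_\gamma/a_\gamma$ and $\gamma(\partial_j\zeta/\zeta)-\partial_j\zeta/\zeta=\partial_je_\gamma/e_\gamma$ propagate through $\mathbf{p}_i,\mathbf{q}_i$: in case \eqref{borel-2}, for instance, equating the two expressions for $\gamma(\mathbf{p}_i(\partial_\bullet\eta/\eta))$ yields the relation $\mathbf{p}_i(\partial_\bullet a_\gamma/a_\gamma)=\mathbf{q}_i(\partial_\bullet e_\gamma/e_\gamma)$, recovering \eqref{g-borel-2}. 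The remaining cases are treated in parallel.

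The main obstacle is the converse: showing that cases \eqref{borel-1}--\eqref{borel-4} are jointly exhaustive, and that the conditions \eqref{g-borel-1}--\eqref{g-borel-3} \emph{exhaust} the relations cutting out $\Gamma\subseteq H\times D$, so that no further restrictions beyond \eqref{g-borel} survive. For this I would invoke the Kolchin--Ostrowski theorem \cite{kolchin:1968} applied to $U$, which constrains the $K$-algebraic relations among the exponentials $\eta,\zeta$ and the primitives $\partial_j\eta/\eta$, $\partial_j\zeta/\zeta$, $\xi/\eta$ generating $U$ over $K$ (cf.\ Remark~\ref{uppv}). Combined with Cassidy's classification (Proposition~\ref{classification}) of $\Lambda\leq\mathbb{G}_m(F)$---as either $\mu_\ell$ (giving \eqref{borel-1}) or a group containing $\mathbb{G}_m(F^\Pi)$ cut out by finitely many linear differential equations on the logarithmic derivative (giving \eqref{borel-2} or \eqref{borel-3}, according to whether the restriction $H\twoheadrightarrow\Lambda$ factors through the diagonal quotient $A$ or genuinely detects the unipotent part $B$)---this yields both the mutual exclusivity of the cases and the finiteness of the $F$-basis $\{(\mathbf{p}_i,\mathbf{q}_i)\}$.
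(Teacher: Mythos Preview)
Your forward direction is essentially the paper's: exhibit elements of $R\smallsetminus K$ from the integrability conditions, and read off relations on $\Gamma$ by comparing the $M$-action with the $N$-action. That part is fine.

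The gap is in your converse. You propose to apply Kolchin--Ostrowski over $K$ to the exponentials $\eta,\zeta$ and the ``primitives'' $\tfrac{\partial_j\eta}{\eta},\tfrac{\partial_j\zeta}{\zeta},\tfrac{\xi}{\eta}$. But $\tfrac{\xi}{\eta}$ is only a primitive over $K$ when $\delta_x\bigl(\tfrac{\xi}{\eta}\bigr)=\eta^{-2}\in K$, i.e.\ when $A\subseteq\{\pm 1\}$. If $A\nsubseteq\{\pm 1\}$, Kolchin--Ostrowski over $K$ says nothing about $\tfrac{\xi}{\eta}$, and your dichotomy ``whether $H\twoheadrightarrow\Lambda$ factors through $A$ or detects $B$'' floats free of any argument. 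The paper handles this by splitting on $A\subseteq\{\pm 1\}$ versus $A\nsubseteq\{\pm 1\}$; in the latter case it proves a separate lemma (Lemma~\ref{blambda}) that $R\subseteq L$, by applying Kolchin--Ostrowski over $L$ rather than $K$ (so that $\eta^{-2}\in L$) and then exploiting the existence of a $\Pi$-constant $a_\sigma\in A$ with $a_\sigma^2\neq 1$ to kill any contribution from $\tfrac{\xi}{\eta}$. Without this step you cannot rule out case~\eqref{borel-3} when $A$ is large, nor conclude that the relations \eqref{g-borel-1}--\eqref{g-borel-2} exhaust the defining equations of $G$.

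A smaller point: your appeal to Proposition~\ref{classification} for ``$\Lambda\leq\mathbb{G}_m(F)$'' is misplaced. $\Lambda$ is a \emph{quotient} of $D\leq\mathbb{G}_m(F)$, not a subgroup; in case~\eqref{borel-3} the surjection $D\twoheadrightarrow\Lambda$ is $e\mapsto\bigl(\mathbf{q}_i(\tfrac{\partial_\bullet e}{e})\bigr)_i$, whose image sits naturally in $\mathbb{G}_a^s$, not $\mathbb{G}_m$. The paper never classifies $\Lambda$ abstractly; it works directly with generators of $R$ over $K$ and obtains the finite basis $\{(\mathbf{p}_i,\mathbf{q}_i)\}$ from the finite (algebraic) transcendence degree of the relevant subfields.
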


We will show slightly more in the course of the proof. We collect these facts in the form of criteria that eliminate from consideration certain possibilities from Theorem~\ref{main2} without testing them directly, based on the data of $H$ and $D$ only.

\begin{cor} \label{criteria}The following cases refer to the list of possibilities in the first part of Theorem~\ref{main2}.\begin{enumerate}[(1)] 
\item In case~\eqref{borel-1}, either $A$ and $D$ are both finite, or else they coincide as subgroups of $\mathbb{G}_m(F)$.
\item In case~\eqref{borel-2}, neither $A$ nor $D$ is $\Pi$-constant.
\item In case~\eqref{borel-3}, $A\subseteq\{\pm 1\}$, $B\neq\{0\}$, and $D$ is not $\Pi$-constant. It follows that $\eta^2\in K$, and therefore the test comprised in \eqref{borel-3} concerns elements of $K$ only.
\end{enumerate}
\end{cor}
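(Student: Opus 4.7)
The plan is to verify each criterion by tracing, in each case of Theorem~\ref{main2}, a distinguished element of $R$ on which the $H$-action reflects the structure of $\Lambda := \mathrm{Gal}_\Delta(R/K)$, then combining with Proposition~\ref{classification} and the algebraic closedness of $F$ (a consequence of $F$ being $\Pi$-closed). For criterion~(1), the hypothesis $k_1u - k_2r_1 = \delta_xf/f$ forces $\eta^{k_1}/(f\zeta^{k_2}) \in F$, whence $\eta^{k_1} \in N \cap M = R$. The restrictions $H \to \Lambda$ and $D \to \Lambda$ act on $\eta^{k_1}$ by $\sigma \mapsto a_\sigma^{k_1}$ and $\tau \mapsto e_\tau^{k_2}$, identifying $\Lambda = A^{k_1} = D^{k_2}$. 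By Proposition~\ref{classification}, any infinite differential algebraic subgroup of $\mathbb{G}_m(F)$ contains $\mathbb{G}_m(F^\Pi)$ and is defined by logarithmic derivative conditions, hence is $k$-divisible for every nonzero $k \in \mathbb{Z}$; thus $A^{k_1} = A$ when $A$ is infinite, and similarly for $D$. As $\Lambda \neq \{1\}$ in case~\eqref{borel-1}, if one of $A,D$ is infinite the other must be too, and both equal $\Lambda$; otherwise both are finite.

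For criterion~(2), the distinguished element of $R$ is $\mathbf{p}(\partial_1\eta/\eta, \dots, \partial_m\eta/\eta)$, on which $\sigma \in H$ acts by translation by the additive constant $\mathbf{p}(\partial_1 a_\sigma/a_\sigma, \dots, \partial_m a_\sigma/a_\sigma)$. Since $\mathbf{p}$ has no constant term, this translation vanishes identically whenever $A \subseteq \mathbb{G}_m(F^\Pi)$, contradicting the non-triviality hypothesis of case~\eqref{borel-2}. The same argument on the $D$-side, via the compatibility $\mathbf{p}(\partial a/a) = \mathbf{q}(\partial e/e)$ from condition~\eqref{g-borel-2}, shows $D$ is not $\Pi$-constant.

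For criterion~(3), the conditions $B \neq \{0\}$ and $D$ non-$\Pi$-constant follow exactly as in criterion~(2). The main step is $A \subseteq \{\pm 1\}$, which I would prove by contradiction. Suppose $A \not\subseteq \{\pm 1\}$. By Proposition~\ref{classification}, either $A = \mu_\ell$ with $\ell > 2$ (and then $A \subseteq F^\Pi$, since $F^\Pi$ is algebraically closed) or $A \supseteq \mathbb{G}_m(F^\Pi)$; in either case one may pick $a \in A \cap F^\Pi$ with $a^2 \neq 1$. Lift $a$ to $\gamma \in \Gamma$ via the surjection $\Gamma \twoheadrightarrow H \twoheadrightarrow A$. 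The hypothesis $\mathbf{p}(\eta^{-2}) - \mathbf{q}(\partial r_1) = \delta_x f$ integrates via $\eta^{-2} = \delta_x(\xi/\eta)$ and $\partial r_1 = \delta_x(\partial\zeta/\zeta)$ to $\mathbf{p}(\xi/\eta) = \mathbf{q}(\partial\zeta/\zeta) + f + c$ in $U$, for some $c \in F$. Applying $\gamma$ to this relation and using the additivity of $\mathbf{p}, \mathbf{q}$ yields $\mathbf{p}((a^{-2} - 1)(\xi/\eta)) \in F$. Since $F \subseteq L = M^B$ and $B$ acts on $\xi/\eta$ by translation, $B$-invariance forces $\mathbf{p}((a^{-2} - 1) b) = 0$ for every $b \in B$. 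The crucial point is that $a^{-2} - 1 \in F^\Pi$ commutes with every $\theta \in \Theta$, so $\mathbf{p}((a^{-2} - 1) b) = (a^{-2} - 1)\mathbf{p}(b)$, forcing $\mathbf{p}(b) = 0$ throughout $B$---contradicting $\{\mathbf{p}(b) : b \in B\} \neq \{0\}$. Hence $A \subseteq \{\pm 1\}$, whence $\sigma(\eta^2) = a_\sigma^2 \eta^2 = \eta^2$ for all $\sigma \in H$, so $\eta^2 \in M^H = K$. The main technical obstacle is carrying out this chain of deductions for criterion~(3); the essential trick is selecting a $\Pi$-constant representative from $A$, so that $(a^{-2} - 1) \in F^\Pi$ commutes past the $\Pi$-derivations inside $\mathbf{p}$.
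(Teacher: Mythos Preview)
Your proof is correct and follows essentially the same route as the paper, which establishes Corollary~\ref{criteria} inside the proof of Theorem~\ref{main2}. Two small remarks. First, for criterion~(1) the paper argues directly from the relation $a_\gamma^{k_1}=e_\gamma^{k_2}$ that $k_1\mathbf{q}(\partial a/a)=k_2\mathbf{q}(\partial e/e)$ for every linear $\mathbf{q}$, so $A$ and $D$ are cut out by the same defining equations when infinite; your $k$-divisibility argument (that an infinite differential algebraic subgroup of $\mathbb{G}_m(F)$ satisfies $A^k=A$ for $k\neq 0$) is a clean alternative reaching the same conclusion. Note, however, that your phrase ``identifying $\Lambda=A^{k_1}=D^{k_2}$'' overstates what you have shown: you only get a surjection $\Lambda\twoheadrightarrow A^{k_1}=D^{k_2}$ via the action on $\eta^{k_1}$, not an isomorphism, but your argument only uses $A^{k_1}=D^{k_2}$, so this is harmless. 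Second, for criterion~(3) your contradiction argument---choose $a\in A\cap F^\Pi$ with $a^2\neq 1$ and exploit that $a^{-2}-1$ commutes past the $\Pi$-derivations in $\mathbf{p}$---is precisely the mechanism behind the paper's Lemma~\ref{blambda}, which the paper proves separately (showing $R\subseteq L$ whenever $A\nsubseteq\{\pm 1\}$) and then invokes; you have simply inlined that lemma's proof in the specific context of case~\eqref{borel-3}.
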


\begin{proof}[Proof of Theorem~\ref{main2}] That the possibilities \eqref{borel-1}--\eqref{borel-4} are exhaustive and mutually exclusive will be proved in Propositions~\ref{asmall} and~\ref{alarge} below. Let us prove that each of these possibilities implies that $G$ is defined as a subset of \eqref{g-borel} by the corresponding equations contained in \eqref{g-borel-1}--\eqref{g-borel-4}, and that no more equations are required. In case~\eqref{borel-4}, the fact that $G$ coincides with \eqref{g-borel} is Corollary~\ref{lambda-trivial}.

In case~\eqref{borel-1}, a computation shows that $\eta^{k_1}\zeta^{-k_2}=cf$ for some $c\in F$. Letting $a_\gamma:=\tfrac{\gamma\eta}{\eta}$ and $e_\gamma:=\tfrac{\gamma\zeta}{\zeta}$ for $\gamma\in\Gamma$, we see that $\gamma(cf)=cf$ implies that $a_\gamma^{k_1}=e_\gamma^{k_2}$, and that $A$ is finite if and only if $D$ is finite. If $A$ and $D$ are infinite, Theorem~\ref{classification} shows that $A$ and $D$ are defined by the same linear differential polynomials (see Proposition~\ref{classification}), because \begin{equation}\label{noad}k_1\mathbf{q}\bigl(\tfrac{\partial_1a_\gamma}{a_\gamma},\dots,\tfrac{\partial_ma_\gamma}{a_\gamma}\bigr)=k_2\mathbf{q}\bigl(\tfrac{\partial_1e_\gamma}{e_\gamma},\dots,\tfrac{\partial_me_\gamma}{e_\gamma}\bigr)\end{equation} for every $\gamma\in\Gamma$ and every $\mathbf{q}\in F\{Y_1,\dots,Y_m\}_\Pi^1$, and the integers $k_1$ and $k_2$ are both different from zero. It follows from~\eqref{noad} that there are no more differential-algebraic relations defining $G$ as a subset of~\eqref{g-borel} which involve only $A$ and $D$. To see that there are no more relations at all, it suffices to show that $B$ is in the kernel of $H\twoheadrightarrow\Lambda$ (by Propostition~\ref{product}), or equivalently that $R\subseteq L$, which follows from Propositions~\ref{asmall} and \ref{alarge}. This concludes the proof of \eqref{g-borel-1}.

In case~\eqref{borel-2}, a computation shows that \begin{gather*}
\mathbf{p}\bigl(\tfrac{\partial_1\eta}{\eta},\dots,\tfrac{\partial_m\eta}{\eta}\bigr)- \mathbf{q}\bigl(\tfrac{\partial_1\zeta}{\zeta},\dots,\tfrac{\partial_m\zeta}{\zeta}\bigr)\in K.       \intertext{Since, for each $\gamma\in\Gamma$,}           \gamma\bigl(\mathbf{p}\bigl(\tfrac{\partial_1\eta}{\eta},\dots,\tfrac{\partial_m\eta}{\eta}\bigr)\bigr) = \mathbf{p}\bigl(\tfrac{\partial_1\eta}{\eta},\dots,\tfrac{\partial_m\eta}{\eta}\bigr)+\mathbf{p}\bigl(\tfrac{\partial_1a_\gamma}{a_\gamma},\dots,\tfrac{\partial_ma_\gamma}{a_\gamma}\bigr);           \intertext{and}           \gamma\bigl(\mathbf{q}\bigl(\tfrac{\partial_1\zeta}{\zeta},\dots,\tfrac{\partial_m\zeta}{\zeta}\bigr)\bigr)=\mathbf{q}\bigl(\tfrac{\partial_1\zeta}{\zeta},\dots,\tfrac{\partial_m\zeta}{\zeta}\bigr)+\mathbf{q}\bigl(\tfrac{\partial_1e_\gamma}{e_\gamma},\dots,\tfrac{\partial_me_\gamma}{e_\gamma}\bigr),          \intertext{the parameterized Galois correspondence implies that}               \mathbf{p}\bigl(\tfrac{\partial_1a_\gamma}{a_\gamma},\dots,\tfrac{\partial_ma_\gamma}{a_\gamma}\bigr)=\mathbf{q}\bigl(\tfrac{\partial_1e_\gamma}{e_\gamma},\dots,\tfrac{\partial_me_\gamma}{e_\gamma}\bigr)  \end{gather*}for each $\gamma\in\Gamma$.If $A$ (resp. $D$) were $\Pi$-constant, then $\tfrac{\partial_j\eta}{\eta}$ (resp. $\tfrac{\partial_j\zeta}{\zeta}$) would belong to $K$ for every $\partial_j\in\Pi$, which is impossible. In particular, $A$ is infinite, so Lemma~\ref{blambda} says that $B$ is in the kernel of $H\twoheadrightarrow\Lambda$. By Proposition~\ref{alarge}, in case~\eqref{borel-2} $$K\bigl\langle\tfrac{\partial_1\eta}{\eta},\dots,\tfrac{\partial_m\eta}{\eta}\bigr\rangle_\Delta \cap K \bigl\langle\tfrac{\partial_1\zeta}{\zeta},\dots,\tfrac{\partial_m\zeta}{\zeta}\bigr\rangle_\Delta=R. $$ By Propostion~\ref{product}, there are no more equations defining $G$ in \eqref{g-borel}. This concludes the proof of~\eqref{g-borel-2}.

In case~\eqref{borel-3}, a computation shows that \begin{gather*}\mathbf{p}\bigl(\tfrac{\xi}{\eta}\bigr)-\mathbf{q}\bigl(\tfrac{\partial_1\zeta}{\zeta},\dots,\tfrac{\partial_m\zeta}{\zeta}\bigr)\in K. \intertext{Let $b_\gamma:=\gamma\tfrac{\xi}{\eta}-\tfrac{\xi}{\eta}$ for each $\gamma\in\Gamma$. Since}\gamma\bigl(\mathbf{p}\bigl(\tfrac{\xi}{\eta}\bigr)\bigr)=\mathbf{p}\bigl(\tfrac{\xi}{\eta}\bigr)+\mathbf{p}(b_\gamma)\quad\qquad\text{and} \quad\qquad \gamma\bigl(\mathbf{q}\bigl(\tfrac{\partial_1\zeta}{\zeta},\dots,\tfrac{\partial_m\zeta}{\zeta}\bigr)\bigr)=\mathbf{q}\bigl(\tfrac{\partial_1\zeta}{\zeta},\dots,\tfrac{\partial_m\zeta}{\zeta}\bigr)+\mathbf{p}\bigl(\tfrac{\partial_1e_\gamma}{e_\gamma},\dots,\tfrac{\partial_me_\gamma}{e_\gamma}\bigr),\end{gather*} we see that $\mathbf{p}\bigl(\tfrac{\xi}{\eta}\bigr)\notin L$, and $\mathbf{p}(b_\gamma)= \mathbf{q}\bigl(\tfrac{\partial_1e_\gamma}{e_\gamma},\dots,\tfrac{\partial_me_\gamma}{e_\gamma}\bigr)$. Since the element $h:=\mathbf{p}\bigl(\tfrac{\xi}{\eta}\bigr)\in R$ does not belong to $L$, $B$ is not in the kernel of $H\twoheadrightarrow\Lambda$. By Lemma~\ref{blambda}, this implies that $A\subseteq\{\pm 1\}$. By Proposition~\ref{asmall}, in this case $L\cap R=K$. Therefore, the equations defining $G$ in \eqref{g-borel} do not involve $A$. This concludes the proof of \eqref{g-borel-3}.\end{proof}

In proving Theorem~\ref{main2}, it is convenient to treat separately the cases where $A\subseteq\{\pm 1\}$ and $A\nsubseteq\{\pm 1\}$. This is done in Proposition~\ref{asmall} and Propostion~\ref{alarge}, respectively. These results are obtained as consequences of the Kolchin-Ostrowski Theorem \cite{kolchin:1968}.

\begin{thm}[(Kolchin-Ostrowski)]\label{kolostro} Let $K\subseteq V$ be a $\delta_x$-field extension such that $V^{\delta_x}=K^{\delta_x}$, and suppose that $\mathbf{e}_1,\dots,\mathbf{e}_m,\mathbf{f}_1,\dots,\mathbf{f}_n\in V$ are elements such that $\tfrac{\delta_x\mathbf{e}_i}{\mathbf{e_i}}\in K$ for each $1\leq i\leq m$, and $\delta_x\mathbf{f}_j\in K$ for each $1\leq j \leq n$.

Then, these elements are algebraically dependent over $K$ if and only if at least one of the following holds:
\begin{enumerate}
\item There exist integers $k_i\in\mathbb{Z}$, not all zero, such that $\prod_{i=1}^m\mathbf{e}_i^{k_i}\in K.$

\item There exist elements $c_j\in K^{\delta_x}$, not all zero, such that $\sum_{j=1}^nc_j\mathbf{f}_j\in K.$

\end{enumerate}
\end{thm}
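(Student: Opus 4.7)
My plan is to prove the nontrivial direction by induction on $m+n$; the converse is immediate, since any relation $\prod_i\mathbf{e}_i^{k_i}\in K$ or $\sum_j c_j\mathbf{f}_j\in K$ is already a polynomial dependence. Writing $\alpha_i:=\delta_x\mathbf{e}_i/\mathbf{e}_i\in K$ and $\beta_j:=\delta_x\mathbf{f}_j\in K$, assume that the tuple $(\mathbf{e},\mathbf{f})$ is algebraically dependent over $K$, and pick a nonzero $P\in K[X_1,\dots,X_m,Y_1,\dots,Y_n]$ of minimal total degree with $P(\mathbf{e},\mathbf{f})=0$.

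Applying $\delta_x$ to the identity $P(\mathbf{e},\mathbf{f})=0$ and using the Leibniz rule yields $Q(\mathbf{e},\mathbf{f})=0$, where
\[
Q:=P^{\delta_x}+\sum_i\alpha_iX_i\tfrac{\partial P}{\partial X_i}+\sum_j\beta_j\tfrac{\partial P}{\partial Y_j},
\]
with $P^{\delta_x}$ denoting application of $\delta_x$ to the coefficients of $P$. Since $X_i\partial/\partial X_i$ preserves total degree and $\partial/\partial Y_j$ strictly lowers it, one has $\deg Q\le\deg P$, so by minimality $Q=\lambda P$ for some $\lambda\in K$. Comparing the coefficient of $X^aY^b$ on both sides gives
\[
\delta_xc_{a,b}+\bigl(\textstyle\sum_ia_i\alpha_i\bigr)c_{a,b}+\textstyle\sum_j(b_j+1)\beta_j\,c_{a,b+\epsilon_j}=\lambda c_{a,b},
\]
where $c_{a,b}\in K$ is the coefficient of $X^aY^b$ in $P$ and $\epsilon_j$ is the $j$-th standard exponent vector.

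For any monomial $X^AY^B$ of maximal $Y$-degree in the support of $P$ the correction term drops out, yielding $\delta_xc_{A,B}/c_{A,B}=\lambda-\sum_iA_i\alpha_i$. If two distinct such top monomials $X^AY^B$ and $X^{A'}Y^{B'}$ exist, subtracting their relations eliminates $\lambda$ and, after integrating with $V^{\delta_x}=K^{\delta_x}$, produces a nontrivial element $\prod_i\mathbf{e}_i^{A_i-A'_i}\in K^\times$, giving conclusion (1). Otherwise the top $Y$-slice contains a unique monomial $X^AY^B$, and I would descend to a coefficient $c_{A,B-\epsilon_j}$: after subtracting the equation for $c_{A,B}$, the remainder collapses to $\delta_x(c_{A,B-\epsilon_j}/c_{A,B})=-B_j\beta_j$, which places $c_{A,B-\epsilon_j}/c_{A,B}+B_j\mathbf{f}_j$ in $K^{\delta_x}$; if $B_j\ne 0$, this forces $\mathbf{f}_j\in K$, a relation of type (2). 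Iterating this process through the $Y$-support of $P$, while invoking the inductive hypothesis on smaller subsets of generators to assume the $\mathbf{e}_i$ are multiplicatively independent modulo $K^\times$ and the $\mathbf{f}_j$ are $K^{\delta_x}$-linearly independent modulo $K$, produces in every remaining case a nontrivial relation of the required form.

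The main obstacle I anticipate is the mixed-term bookkeeping when the support of $P$ couples $X$- and $Y$-variables at several $Y$-degree levels: the corrections from $\partial/\partial Y_j$ propagate lower-order residues through many coefficients, and these must be shown to organize cleanly into either an integer combination of the $\alpha_i$ (integrating to conclusion (1)) or a $K^{\delta_x}$-linear combination of the $\beta_j$ (integrating to conclusion (2)), rather than some broader $K$-linear residue. Choosing a term order that prioritizes $Y$-degree, and invoking the inductive hypothesis at each descent step to rule out the broader residues, is the delicate bookkeeping needed to close the argument.
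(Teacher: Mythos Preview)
The paper does not supply a proof of this theorem: it is quoted as the Kolchin--Ostrowski Theorem with a reference to \cite{kolchin:1968} and then used as a black box in Propositions~\ref{asmall}, \ref{alarge} and Lemma~\ref{blambda}. So there is no argument in the paper to compare yours against, and I comment only on your sketch.

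Your strategy (differentiate a minimal algebraic relation and compare) is the classical one, but the inference ``$\deg Q\le\deg P$, so by minimality $Q=\lambda P$'' is not valid as written. Minimal \emph{total degree} does not make the relation unique up to scalar: two $K$-linearly independent degree-$d$ polynomials can both vanish on $(\mathbf e,\mathbf f)$. What actually saves the step is the observation that the top-degree monomials of $Q$ form a \emph{subset} of those of $P$ (because $X_i\partial_{X_i}$ preserves each monomial and $\partial_{Y_j}$ strictly lowers degree); hence if you additionally choose $P$ to minimise the number of top-degree monomials, then $Q-\lambda P$, with $\lambda$ chosen to kill one fixed top monomial, has strictly fewer top-degree monomials and must vanish. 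You need to state and use this refinement. A related omission appears in your ``two top monomials'' step: if $A=A'$ but $B\neq B'$, your subtraction only yields $c_{A,B}/c_{A,B'}\in K^{\delta_x}$, which is neither conclusion~(1) nor~(2); one must instead use that constant ratio to eliminate a monomial and contradict the refined minimality. Finally, your descent to $c_{A,B-\epsilon_j}$ is correct when the top $Y$-slice is a single monomial with $|B|>0$, but you still owe the reader the case $|B|=0$ (pure $X$-polynomial), which is the classical Ostrowski situation and needs its own short argument. These are precisely the bookkeeping issues you anticipate in your last paragraph; they are genuine, and until they are discharged the argument remains an outline.
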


The following result implies Theorem~\ref{main2} in case $A\subseteq \{\pm 1\}$.

\begin{prop} \label{asmall} If $A\subseteq \{\pm 1\}$,  then $\eta^2\in K$ and exactly one of the following possibilities holds: \begin{itemize}
\item[\eqref{borel-1}] $A=\{\pm 1\}$, $D$ is finite of even order $2k$, and $$u-kr_1=\tfrac{\delta_xf}{f}$$ for some $f\in K$. Moreover, in this case $R=L$.
\item[\eqref{borel-3}] There exist linear differential polynomials $\mathbf{p}\in F\{Y\}_\Pi^1$ and $\mathbf{q}\in F\{Y_1,\dots,Y_m\}_\Pi^1$ such that the image group \begin{gather*} \{\mathbf{p}(b) \ | \ b\in B\} \neq \{ 0\}, \intertext{and such that there exists an element $f\in K$ satisfying} \mathbf{p}(\eta^{-2})-\mathbf{q}\bigl(\partial_1r_1,\dots,\partial_mr_1\bigr)=\delta_xf.\end{gather*} The $F$-vector space generated by such pairs $(\mathbf{p},\mathbf{q})$ admits a finite $F$-basis $\{(\mathbf{p}_1,\mathbf{q}_1),\dots,(\mathbf{p}_s,\mathbf{q}_s)\}$. Moreover, in this case $R\cap L=K$.
\item[\eqref{borel-4}] $\Lambda=\{1\}$.
\end{itemize}
\end{prop}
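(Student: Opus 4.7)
That $\eta^2 \in K$ is immediate from the parameterized Galois correspondence: since $A \simeq \mathrm{Gal}_\Delta(L/K) \subseteq \{\pm 1\}$, every $\sigma \in H$ sends $\eta$ to $\pm\eta$ and hence fixes $\eta^2$, giving $\eta^2 \in M^H = K$. The trichotomy is organized by two mutually exclusive alternatives: either $\eta \in N$ (necessarily with $A = \{\pm 1\}$), or $L \cap R = K$. Since $[L:K] \leq 2$ forces $L \cap R \in \{K, L\}$, these exhaust all configurations (the degenerate $A = \{1\}$ falls in the second alternative).

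Suppose first $\eta \in N$, so $A = \{\pm 1\}$. Each $\tau \in D$ satisfies $\tau(\eta) = \chi(\tau)\eta$ for a character $\chi \colon D \to \{\pm 1\}$ that is nontrivial since $\eta \notin K$. By Cassidy's classification (Proposition~\ref{classification}) together with the fact that $F^\Pi$ is algebraically closed---so $\mathbb{G}_m(F^\Pi)$ is divisible and admits no index-$2$ subgroup---we conclude $D = \mu_{2k}$ with $\chi(e) = e^k$. Then $\eta/\zeta^k \in N^D = K$, so $\eta = f\zeta^k$ for some $f \in K^\times$, giving $u - kr_1 = \delta_xf/f$. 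To see $R = L$: in this sub-case $N = K(\zeta)$ is algebraic over $K$, hence so is $R \subseteq N$; but $R \subseteq M$ as well, and $M \cap \bar{K} = L$, since the Galois group $B \leq \mathbb{G}_a(F)$ of $M/L$ is a connected differential algebraic subgroup of the torsion-free group $\mathbb{G}_a$ in characteristic zero, which admits no proper finite-index subgroup. Combined with $L \subseteq R$, this yields $R = L$, establishing case~\eqref{borel-1}.

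Otherwise $L \cap R = K$, and either $R = K$ (case~\eqref{borel-4}) or $R \neq K$, in which case we must construct the pair $(\mathbf{p}, \mathbf{q})$ of case~\eqref{borel-3}. Consider the collection $\{\zeta\} \cup \{\theta(\xi/\eta) : \theta \in \Theta\} \cup \{\theta(\partial_j\zeta/\zeta) : \theta \in \Theta,\ 1 \leq j \leq m\}$ in $U$ over $K$: the element $\zeta$ has logarithmic $\delta_x$-derivative $r_1 \in K$, and each remaining element is a $\delta_x$-antiderivative of an element of $K$ (using $\eta^{-2}, \partial_jr_1 \in K$ and that $\delta_x$ commutes with each element of $\Theta$). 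Any $h \in R \setminus K$ induces an algebraic dependency over $K$ amongst finitely many of these; the Kolchin-Ostrowski Theorem (Theorem~\ref{kolostro}) then yields either a monomial relation $\zeta^k \in K$ (which merely constrains $D$ and, by our exclusion of $\eta \in N$, produces nothing in $R \setminus K$) or an $F$-linear integral relation $\mathbf{p}(\xi/\eta) - \mathbf{q}(\partial_1\zeta/\zeta,\dots,\partial_m\zeta/\zeta) \in K$ with $\mathbf{p} \neq 0$. Applying $\delta_x$ to the latter gives $\mathbf{p}(\eta^{-2}) - \mathbf{q}(\partial_1r_1,\dots,\partial_mr_1) = \delta_xf$ for some $f \in K$. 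The condition $\{\mathbf{p}(b) : b \in B\} \neq \{0\}$ is equivalent to $\mathbf{p}(\xi/\eta) \notin K$: since $\sigma_{(1,b)}(\mathbf{p}(\xi/\eta)) = \mathbf{p}(\xi/\eta) + \mathbf{p}(b)$, if $\mathbf{p}$ vanished on $B$ then $\mathbf{p}(\xi/\eta) \in M^B = L$, and hence $\mathbf{p}(\xi/\eta) \in L \cap R = K$. Finiteness of an $F$-basis $\{(\mathbf{p}_i, \mathbf{q}_i)\}$ follows from Cassidy's classification applied to $B$: the $F$-subspace of $\mathbf{p} \in F\{Y\}_\Pi^1$ vanishing on $B$ has finite codimension, and each admissible $\mathbf{p}$ determines its $\mathbf{q}$-partner modulo $\delta_xK$.

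The three cases are mutually exclusive via their characterizations of $R$ (respectively $R = L \supsetneq K$, $R \neq K$ with $L \cap R = K$, and $R = K$). The main obstacle is the careful bookkeeping in the Kolchin-Ostrowski argument: one must verify that, with $\eta \notin N$ excluded, the only way to produce elements of $R \setminus K$ is through $F$-linear integral relations of the announced form, with monomial dependencies $\eta^{k_1}\zeta^{k_2} \in K$ either reducing to internal relations $\zeta^{k_2} \in K$ within $N$ (when $k_1$ is even) or forcing $\eta \in N$ (when $k_1$ is odd)---the latter being precisely the content of case~\eqref{borel-1} and thus excluded in the second alternative.
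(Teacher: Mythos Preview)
Your argument follows the paper's strategy---reduce to Kolchin--Ostrowski applied to the exponentials $\eta,\zeta$ and the primitives $\theta(\xi/\eta)$, $\theta(\partial_j\zeta/\zeta)$---but you organize it differently: you split first on whether $\eta\in N$ (equivalently $L\cap R=L$), whereas the paper applies Kolchin--Ostrowski immediately to an element of $R\setminus K$ and then splits on the monomial-versus-linear dichotomy it produces. Your derivation of case~\eqref{borel-1} via the character $\chi\colon D\to\{\pm1\}$ and connectedness of infinite $D$ is arguably cleaner than the paper's, which extracts the relation $\eta\zeta^{-k}\in K$ from the monomial branch.

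There are two gaps. First, in your third paragraph the collection to which you apply Theorem~\ref{kolostro} omits $\eta$, yet any $h\in R\setminus K\subseteq M$ is a priori only a rational expression in $\eta$ \emph{together with} the $\theta(\xi/\eta)$ over $K$; you cannot drop $\eta$ without argument. Your final paragraph implicitly corrects this by discussing relations $\eta^{k_1}\zeta^{k_2}\in K$, so you appear to be aware of the issue, but the body of the argument should include $\eta$ among the exponentials from the outset (as the paper does) and then dispose of the resulting monomial relations via $\eta^2\in K$ and the excluded hypothesis $\eta\in N$.

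Second, and more seriously, your finiteness argument is incorrect. The claim that the subspace of $\mathbf{p}\in F\{Y\}_\Pi^1$ vanishing on $B$ has finite codimension fails already for $B=\mathbb{G}_a(F)$ (where the vanishing subspace is $\{0\}$), and for $B=\ker(\partial_1)$ with $|\Pi|\geq 2$ the quotient is $F[\partial_2,\dots,\partial_m]$, again infinite-dimensional. The paper argues instead that $M$ and $N$ have finite algebraic transcendence degree over $K$---invoking \cite{minchenko-ovchinnikov-singer:2013a} and the $\Pi$-constancy of $H/R_u(H)\simeq A\subseteq\{\pm1\}$---so that only finitely many of the $\theta(\xi/\eta)$ and $\theta(\partial_j\zeta/\zeta)$ are needed modulo $K$.
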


\begin{proof} If $A\subseteq \{\pm 1\}$, then $\sigma\eta=\pm\eta$ for every $\sigma\in H$, whence $\eta^2\in K$ and $L=K(\eta)$. If $\Lambda\neq\{1\}$, there exists an element $f\in R$ such that $f\notin K$, and there exist non-constant rational functions $Q_1(Y,Z_{\partial,\theta})$ and $Q_2(Y, Z_\theta)$ with coefficients in $K$, where the variables are indexed by $\partial\in\Pi$ and $\theta\in\Theta$, such that \begin{equation}\label{lambdasmall} Q_1\left(\zeta,\theta\tfrac{\partial\zeta}{\zeta}\right)=f=Q_2\left(\eta,\theta\tfrac{\xi}{\eta}\right).\end{equation} We may assume that the powers of $\zeta$ (resp. $\eta$) appearing in the numerator and denominator of $Q_1$ (resp. $Q_2$) are algebraically independent over $K$, and that the $\theta\tfrac{\partial\zeta}{\zeta}$ (resp. $\theta\tfrac{\xi}{\eta}$) appearing in $Q_1$ (resp. $Q_2$) are $F$-linearly independent. Since $\delta_x\theta\tfrac{\partial\zeta}{\zeta}\in K$ and $\delta_x\theta\tfrac{\xi}{\eta}\in K$ for each $\theta\in\Theta$ and $\partial\in\Pi$, Theorem~\ref{kolostro} implies that these $\theta\tfrac{\partial\eta}{\eta}$ (resp. $\theta\tfrac{\xi}{\eta}$) are algebraically independent over $K(\zeta)$ (resp. $L$). Clearing denominators in~\eqref{lambdasmall} shows that the elements $\zeta, \eta, \ \theta\tfrac{\partial\zeta}{\zeta}, \ \theta\tfrac{\xi}{\eta}$ are algebraically dependent over $K$. Since $\tfrac{\delta_x\zeta}{\zeta}, \ \tfrac{\delta_x\eta}{\eta}\in K$ for each $\partial\in\Pi$ and $\theta\in\Theta$, Theorem~\ref{kolostro} says that there exist integers $k_1,k_2\in\mathbb{Z}$, none of them zero, such that $\eta^{k_1}\zeta^{k_2}\in K$, or else there exist $c_\theta\in F$, almost all zero but not all zero, and $d_{j,\theta}\in F$, almost all zero but not all zero, where $1\leq j \leq m$, such that \begin{equation}\label{asmall-ops}\sum_{\theta}c_\theta\theta\tfrac{\xi}{\eta}-\sum_{j,\theta}d_{j,\theta}\theta\tfrac{\partial_j\zeta}{\zeta}\in K.\end{equation}

First suppose that $\eta^{k_1}\zeta^{k_2}\in K$ as above. Since $\Lambda\neq\{1\}$, we may assume that $k_1=1$, $\eta\notin K$, and therefore $\zeta^{k_2}\notin K$. Now $\eta\zeta^{k_2}\in K$ implies that $\zeta^{2k_2}\in K$, whence $D$ is finite. Let $k>0$ be the smallest integer $k_2$. Then $K(\zeta^k)= L=R,$ the order of $D$ is $2k$, and $$u-kr_1=\tfrac{\delta_x(\eta\zeta^{-k})}{\eta\zeta^{-k}}=\tfrac{\delta_xf}{f}.$$

Supposing instead that there are elements $c_\theta,d_{\partial,\theta}\in F$ as in \eqref{asmall-ops}, set \begin{gather*}\mathbf{p}:=\sum_\theta c_\theta\theta Y\in F\{Y\}_\Pi^1; \quad\qquad \text{and}  \qquad\quad \mathbf{q}:=\sum_{j,\theta}d_{j,\theta}\theta Y_j\in F\{Y_1,\dots,Y_m\}_\Pi^1, \intertext{so that \eqref{asmall-ops} now reads} \mathbf{p}\bigl(\tfrac{\xi}{\eta}\bigr)-\mathbf{q}\bigl(\tfrac{\partial_1\zeta}{\zeta},\dots,\tfrac{\partial_m\zeta}{\zeta}\bigr)=:f\in K. \intertext{Since $\Lambda\neq\{1\}$, we may assume that $\mathbf{p}\bigl(\tfrac{\xi}{\eta}\bigr)\notin K$, and that} \mathbf{q}\bigl(\tfrac{\partial_1\zeta}{\zeta},\dots,\tfrac{\partial_m\zeta}{\zeta}\bigr)\notin K.\intertext{This implies that $D$ is infinite, and hence connected \cite{cassidy:1972,kolchin:1985}, because whenever $D$ is finite we have that $\tfrac{\partial\zeta}{\zeta}\in K$ for each $\partial\in\Pi$. Let $b_\gamma:=\gamma\smash{\tfrac{\xi}{\eta}-\tfrac{\xi}{\eta}}$ and $e_\gamma:=\tfrac{\gamma\zeta}{\zeta}$ for each $\gamma\in\Gamma$, and note that} \gamma \bigr( \mathbf{p}\bigl(\tfrac{\xi}{\eta}\bigr)\bigr)=\mathbf{p}\bigl(\tfrac{\xi}{\eta}\bigr)+\mathbf{p}(b_\gamma). \intertext{Hence, there exists $b\in B$ such that $\mathbf{p}(b)\neq 0$. Observe that
 $\delta_x\mathbf{p}\bigl(\tfrac{\xi}{\eta}\bigr)=\mathbf{p}(\eta^{-2});$ and}
\delta_x\bigl(\mathbf{q}\bigl(\tfrac{\partial_1\zeta}{\zeta},\dots,\tfrac{\partial_m\zeta}{\zeta}\bigr)\bigr)=\mathbf{q}\bigl(\partial_1r_1,\dots,\partial_mr_1\bigr).\end{gather*}
The finite-dimensionality of the $F$-vector space generated by such pairs $(\mathbf{p},\mathbf{q})$ follows from the fact that both $N$ and $M$ have finite algebraic transcendence degree over $K$ (since $H/R_u(H)\simeq A\subseteq\{\pm 1\}$ is $\Pi$-constant\cite{minchenko-ovchinnikov-singer:2013a}), so we only need to consider finitely many elements from $\{\theta\tfrac{\xi}{\eta},\theta\tfrac{\partial\zeta}{\zeta} \ | \ \partial\in\Pi, \theta\in\Theta\}$.\end{proof}

We now consider the case when $A\nsubseteq\{\pm 1\}$. We begin with a preliminary result.

\begin{lem} \label{blambda} If $A\nsubseteq\{\pm 1\}$, then $B$ is in the kernel of the restriction homomorphism $H\twoheadrightarrow\Lambda$.
\end{lem}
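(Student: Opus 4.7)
The plan is to show $B\subseteq[H,H]\subseteq\ker(H\twoheadrightarrow\Lambda)$. The second inclusion holds because $\Lambda$ is commutative: as noted just before Lemma~\ref{butterfly}, the intermediate field $R$ is a $\mathrm{PPV}$-extension of $K$, so the restriction $D=\mathrm{Gal}_\Delta(N/K)\twoheadrightarrow\mathrm{Gal}_\Delta(R/K)=\Lambda$ is surjective, and $D\subseteq\mathbb{G}_m(F)$ is abelian.

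For the first inclusion, I begin with an explicit commutator computation in the Borel group~\eqref{borel-eqn}. For any $\sigma=\bigl(\begin{smallmatrix}a & c \\ 0 & a^{-1}\end{smallmatrix}\bigr)\in H$ and $\tau=\bigl(\begin{smallmatrix}1 & b \\ 0 & 1\end{smallmatrix}\bigr)\in B$, a direct calculation yields $[\sigma,\tau]=\bigl(\begin{smallmatrix}1 & (a^2-1)b \\ 0 & 1\end{smallmatrix}\bigr)$, so $(a^2-1)B\subseteq[H,H]\cap B$ for every $a\in A$. I then use Proposition~\ref{classification} and the hypothesis $A\nsubseteq\{\pm 1\}$ to produce some $a_0\in A$ with $a_0\in F^\Pi$ and $a_0^2\neq 1$: either $A=\mu_\ell$, in which case $\ell\geq 3$ forces the existence of a primitive $\ell$-th root of unity $a_0\in A$ with $a_0^2\neq 1$ (and roots of unity are automatically $\Pi$-constant), or $A\supseteq\mathbb{G}_m(F^\Pi)$, in which case one may take $a_0=2\in\mathbb{Q}^\times\subseteq F^\Pi$.

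Finally, I would verify that $B$ is an $F^\Pi$-vector subspace of $\mathbb{G}_a(F)$. By Proposition~\ref{classification}, $B$ is cut out by linear differential polynomials in $\Pi$ with coefficients in $F$, and the identity $\theta(cb)=c\theta(b)$ for $c\in F^\Pi$ and $\theta\in\Theta$ (immediate by induction on the order of $\theta$) shows that these defining equations are $F^\Pi$-linear. Since $a_0^2-1\in F^\Pi\setminus\{0\}$, multiplication by $a_0^2-1$ is an $F^\Pi$-linear automorphism of $B$, giving $(a_0^2-1)B=B$. Combining, $B=(a_0^2-1)B\subseteq[H,H]\subseteq\ker(H\twoheadrightarrow\Lambda)$, as required. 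The main subtlety lies in the need for $a_0$ to lie in $F^\Pi$ (so that $a_0^2-1$ actually acts on $B$); this is what forces the appeal to Proposition~\ref{classification}, rather than picking a generic element of $A\setminus\mu_2$.
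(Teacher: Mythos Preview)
Your proof is correct and takes a genuinely different route from the paper's. The paper argues field-theoretically: to show $R\subseteq L$ (equivalently $B\subseteq\ker(H\twoheadrightarrow\Lambda)$), it assumes $f\in R\setminus L$, writes $f$ two ways, applies the Kolchin--Ostrowski Theorem over $L$ to extract a linear relation among the $\theta\tfrac{\xi}{\eta}$ and $\theta\tfrac{\partial\zeta}{\zeta}$, differentiates, and then applies $(\sigma-1)$ for a chosen $\sigma$ with $a_\sigma\in F^\Pi$ and $a_\sigma^2\neq 1$ to derive a contradiction. Your argument works entirely at the group level: since $\Lambda$ is a quotient of the abelian group $D$, it suffices to show $B\subseteq[H,H]$, which follows from the explicit commutator identity $[\sigma,\tau]=\bigl(\begin{smallmatrix}1 & (a^2-1)b\\ 0 & 1\end{smallmatrix}\bigr)$ together with the $F^\Pi$-linearity of $B$. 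Both proofs hinge on the same key ingredient---an element $a_0\in A\cap F^\Pi$ with $a_0^2\neq 1$, produced via Cassidy's classification---but your route is shorter, avoids Kolchin--Ostrowski entirely, and makes the role of that element more transparent (indeed, you explicitly justify its existence, which the paper's proof leaves implicit). The paper's approach, by contrast, establishes a template that is reused almost verbatim in the proof of the subsequent Proposition~\ref{alarge}.
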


\begin{proof}To show that $R\subseteq L$, we proceed by contradiction: suppose that $f\in R$ and $f\notin L$. There exist non-constant rational functions $Q_1(Y,Z_{\partial,\theta})$ and $Q_2(Z_\theta)$ with coefficients in $L$, where the variables are indexed by $\theta\in\Theta$ and $\partial\in\Pi$, such that \begin{equation} \label{lambdalarge} Q_1\bigl(\zeta,\theta\tfrac{\partial\zeta}{\zeta}\bigr)=f=Q_2\bigl(\theta\tfrac{\xi}{\eta}\bigr).\end{equation} We assume without loss of generality that the $\theta\tfrac{\partial\zeta}{\zeta}$ (resp. $\theta\tfrac{\xi}{\eta}$) appearing in $Q_1$ (resp. $Q_2$) are algebraically independent over $L$. Clearing denominators in \eqref{lambdalarge} shows that the elements $\zeta,\theta\tfrac{\partial\zeta}{\zeta},\theta\tfrac{\xi}{\eta}$ are algebraically dependent over $L$. Since $$\tfrac{\delta_x\zeta}{\zeta}, \ \delta_x\theta\tfrac{\partial\zeta}{\zeta}, \ \delta_x\theta\tfrac{\xi}{\eta}\in L$$ for each $\theta\in \Theta$ and $\partial\in\Pi$, and  since $f\notin L$, by Theorem~\ref{kolostro} there exist $c_\theta \in F=L^{\delta_x}$, almost all zero but not all zero, and $d_{\partial,\theta}\in F$, almost all zero but not all zero, such that \begin{equation}\label{blambda-1}\sum_{\theta\in\Theta} c_\theta\theta\tfrac{\xi}{\eta}+\sum_{\partial\in \Pi,\ \theta\in\Theta}d_{\partial,\theta}\theta\tfrac{\partial\zeta}{\zeta}=:g\in L.\end{equation} Applying $\delta_x$ on both sides of \eqref{blambda-1}, we obtain \begin{equation}\label{blambda-2}\sum_{\theta\in\Theta}c_\theta\theta(\eta^{-2})+\sum_{\partial\in\Pi, \ \theta\in\Theta}d_{\partial,\theta}\theta\partial r_1 = \delta_xg.\end{equation} Now choose $\sigma\in\mathrm{Gal}_\Delta(L/K)$ such that $a_\sigma:=\tfrac{\sigma\eta}{\eta}\in F^\Pi$ and $a_\sigma^2\neq 1$, and apply $(\sigma-1)$ to both sides of \eqref{blambda-2}, to obtain $$(a_\sigma^{-2}-1)\sum_{\theta\in\Theta}c_\theta\theta(\eta^{-2})=\delta_x(\sigma g - g).$$ But this implies that $\sum c_\theta\theta\tfrac{\xi}{\eta}\in L$, a contradiction. This concludes the proof that $R\subseteq L$.\end{proof}

\begin{prop} \label{alarge} If $A\nsubseteq \{\pm 1\}$, then $R\subseteq L$, and exactly one of the following possibilities holds: \begin{itemize}  
\item[\eqref{borel-1}] There exist integers $k_1,k_2\in\mathbb{Z}$, with $\mathrm{gcd}(k_1,k_2)$ as small as possible, such that the image group\begin{gather*}  \{a^{k_1} \ | \ a\in A\}\neq \{0\},\intertext{and such that there exists an element $f\in K$ satisfying}  k_1u-k_2r_1=\tfrac{\delta_xf}{f}.\end{gather*} 
\item[\eqref{borel-2}] Case \eqref{borel-1} doesn't hold, and there exist linear differential polynomials $\mathbf{p},\mathbf{q}\in F\{Y_1,\dots, Y_m\}_\Pi^1$ such that the image group \begin{gather*} \bigl\{\mathbf{p}\bigl(\tfrac{\partial_1a}{a},\dots,\tfrac{\partial_ma}{a}\bigr) \ \big| \ a\in A\bigr\}\neq\{0\}, \intertext{and such that there exists an element $f\in K$ satisfying} \mathbf{p}(\partial_1u,\dots,\partial_mu)-\mathbf{q}(\partial_1r_1,\dots,\partial_mr_1)=\delta_xf.\end{gather*}The $F$-vector space generated by such pairs $(\mathbf{p},\mathbf{q})$ admits a finite $F$-basis $\{(\mathbf{p}_1,\mathbf{q}_1),\dots,(\mathbf{p}_s,\mathbf{q}_s)\}$. Moreover, in this case \begin{equation}\label{radditive} K\bigl\langle\tfrac{\partial_1\eta}{\eta},\dots,\tfrac{\partial_m\eta}{\eta}\bigr\rangle_\Delta \cap K \bigl\langle\tfrac{\partial_1\zeta}{\zeta},\dots,\tfrac{\partial_m\zeta}{\zeta}\bigr\rangle_\Delta=R.\end{equation} 
\item[\eqref{borel-4}] $\Lambda=\{1\}$. \end{itemize}
\end{prop}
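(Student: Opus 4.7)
The strategy parallels the proof of Proposition~\ref{asmall}, but simplifies because $R \subseteq L$ is immediate from Lemma~\ref{blambda}. If $\Lambda = \{1\}$ we land in case~(iv) directly, so assume $\Lambda \neq \{1\}$ and fix $f \in R \setminus K$. Using both $f \in L = K\langle \eta \rangle_\Delta$ and $f \in N = K\langle \zeta \rangle_\Delta$, write $f$ as rational functions $Q_1(\eta, \theta\partial_i\eta/\eta) = Q_2(\zeta, \theta\partial_j\zeta/\zeta)$ over $K$; clearing denominators yields an algebraic dependence over $K$ among these generators. Since $\delta_x\eta/\eta = u$, $\delta_x\zeta/\zeta = r_1$, $\delta_x\theta(\partial_i\eta/\eta) = \theta\partial_iu$, and $\delta_x\theta(\partial_j\zeta/\zeta) = \theta\partial_jr_1$ all lie in $K$, Kolchin--Ostrowski (Theorem~\ref{kolostro}) yields at least one of: \textbf{(a)} integers $k_1, k_2$, not both zero, with $\eta^{k_1}\zeta^{-k_2} \in K$; or \textbf{(b)} elements $c_{i,\theta}, d_{j,\theta} \in F$, not all zero, with $\sum c_{i,\theta}\theta(\partial_i\eta/\eta) - \sum d_{j,\theta}\theta(\partial_j\zeta/\zeta) \in K$.

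If case~(a) yields a relation with non-trivial image $\{a^{k_1} : a \in A\}$, we are in case~(i): one shows $k_1, k_2$ are both nonzero (else $\eta^{k_1}\in K$ forces $\eta\in K$ after gcd-reduction, contradicting $A\nsubseteq\{\pm 1\}$; or $\zeta^{-k_2}\in K$ ultimately forces $f\in K$, contradicting our choice), normalizes by dividing out $\gcd(k_1, k_2)$, and takes $\delta_x\log$ to obtain $k_1u - k_2r_1 = \delta_xf'/f'$ with $f'\in K$. If no such relation exists then, since $\Lambda\neq\{1\}$, case~(b) must hold for some $f$; setting $\mathbf{p} := \sum c_{i,\theta}\theta Y_i$ and $\mathbf{q} := \sum d_{j,\theta}\theta Y_j$ and absorbing into the $K$-element any terms whose individual evaluations $\mathbf{p}(\partial_i\eta/\eta)$ or $\mathbf{q}(\partial_j\zeta/\zeta)$ already lie in $K$, we may assume $\mathbf{p}(\partial_i\eta/\eta) \notin L^A = K$, which forces $\{\mathbf{p}(\partial_1a/a,\ldots,\partial_ma/a) : a \in A\} \neq \{0\}$. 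Applying $\delta_x$ to the reduced relation produces the displayed equation of case~(ii).

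For \eqref{radditive}, the inclusion $\subseteq$ uses $L\cap N = R$ (which follows from $R\subseteq L$); for $\supseteq$, in case~(ii) the absence of any integer relation of type~(a) means that $\eta$ and $\zeta$ remain multiplicatively independent modulo $K$ in the tower, and the argument above adapted to arbitrary $f\in R$ shows each such $f$ lies in both $K\langle\partial_i\eta/\eta\rangle_\Delta$ and $K\langle\partial_j\zeta/\zeta\rangle_\Delta$. Finite-dimensionality of the $F$-space of pairs $(\mathbf{p}, \mathbf{q})$ follows from Cassidy's Proposition~\ref{classification}: as a differential algebraic subgroup of $\mathbb{G}_m(F)$, $\Lambda$ is defined by finitely many linear differential polynomials, which bound the space of compatible pairs. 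The main obstacle is the bookkeeping in case~(i) to ensure $\gcd(k_1, k_2)$ is truly minimal subject to the non-triviality of $\{a^{k_1} : a \in A\}$, which requires analyzing the lattice of all integer relations $\eta^a\zeta^b \in K$ and its interaction with the possibly-finite subgroups $A$ and $D$ of $\mathbb{G}_m(F)$.
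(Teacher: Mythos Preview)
Your argument follows the paper's proof almost step for step: invoke Lemma~\ref{blambda} for $R\subseteq L$, pick $f\in R\setminus K$, express it both as an element of $L$ and of $N$, clear denominators to obtain an algebraic dependence among $\eta,\zeta,\theta\tfrac{\partial_j\eta}{\eta},\theta\tfrac{\partial_j\zeta}{\zeta}$ over $K$, and apply Kolchin--Ostrowski to split into the multiplicative case~\eqref{borel-1} and the additive case~\eqref{borel-2}. The derivation of the displayed $K$-identities via $\delta_x$ and the verification of \eqref{radditive} when no integer relation exists are likewise the same as in the paper.

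The one substantive discrepancy is your finite-dimensionality argument. You assert that $\Lambda$ is a differential algebraic subgroup of $\mathbb{G}_m(F)$ and then invoke Proposition~\ref{classification}, but $\Lambda$ is only a \emph{quotient} of $D\leq\mathbb{G}_m(F)$ (and of $A$, via $R\subseteq L$), and such quotients need not embed in $\mathbb{G}_m$: for example, if $D\supseteq D'\supseteq\mathbb{G}_m(F^\Pi)$ then $D/D'$ is a vector group under the logarithmic derivative. So Proposition~\ref{classification} does not apply to $\Lambda$ directly. The paper instead argues that $L$ and $N$ have finite transcendence degree over $K$, so that only finitely many of the elements $\theta\tfrac{\partial_j\eta}{\eta}$ and $\theta\tfrac{\partial_j\zeta}{\zeta}$ are needed. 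Your intuition that the defining equations of the relevant groups bound the space of pairs is sound, but the group to which you should apply Cassidy's classification is $A$ (and $D$), not $\Lambda$.
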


\begin{proof} Since $A\nsubseteq \{\pm 1\}$, Lemma~\ref{blambda} says that $R\subseteq L$. Assume that $\Lambda\neq\{1\}$, and let $f\in R$ such that $f\notin K$. Then there exist non-constant rational functions $Q_1(Y,Z_{\partial,\theta})$ and $Q_2(Y,Z_{\partial,\theta})$ with coefficients in $K$, where the variables are indexed by $\partial\in\Pi$ and $\theta\in\Theta$, such that \begin{equation}\label{alargeeq}Q_1(\eta,\theta\tfrac{\partial\eta}{\eta})=f=Q_2(\zeta,\theta\tfrac{\partial\zeta}{\zeta}).\end{equation} We assume without loss of generality that the $\theta\tfrac{\partial\eta}{\eta}$ (resp. $\theta\tfrac{\partial\zeta}{\zeta}$) appearing in $Q_1$ (resp. $Q_2$) are algebraically independent over $K$ (cf. the proof of Proposition~\ref{asmall}). Clearing denominators in \eqref{alargeeq} shows that the elements $\eta,\ \zeta,\ \theta\tfrac{\partial\eta}{\eta},\ \theta\tfrac{\partial\zeta}{\zeta}$ are algebraically dependent over $K$. Since \begin{gather*}\tfrac{\delta_x\eta}{\eta},\ \tfrac{\delta_x\zeta}{\zeta},\ \delta_x\theta\tfrac{\partial\eta}{\eta},\ \delta_x\theta\tfrac{\partial\zeta}{\zeta} \in K; \end{gather*} Theorem~\ref{kolostro} implies that either there are integers $k_1,k_2\in\mathbb{Z}$, none of them zero, such that $\eta^{k_1}\zeta^{-k_2}\in K$, or else there exist $c_{j,\theta}\in F$, almost all zero but not all zero, and $d_{j,\theta}\in F$, almost all zero but not all zero, where $1\leq j \leq m$, such that\begin{equation}\label{alarge-ops} \sum_{j,\theta}c_{j,\theta}\theta\tfrac{\partial_j\eta}{\eta}-\sum_{j,\theta}d_{j,\theta}\theta\tfrac{\partial_j\zeta}{\zeta}\in K.\end{equation}

If $\eta^{k_1}\zeta^{-k_2}=:f\in K$ for $k_1,k_2\in\mathbb{Z}$ as above, since $\Lambda\neq\{1\}$ we may assume that $\eta^{k_1}\notin K$. Hence, there exists $a\in A$ such that $a^{k_1}\neq 1$, and \begin{equation*}k_1u-k_2r_1=\tfrac{\delta_x(\eta^{k_1}\zeta^{-k_2})}{\eta^{k_1}\zeta^{-k_2}}=\tfrac{\delta_xf}{f}.\end{equation*} If such integers $k_1$ and $k_2$ do not exist, then \eqref{radditive} is satisfied.

Now suppose there are elements $c_{j,\theta},d_{j,\theta}\in F$ as in \eqref{alarge-ops}, and set \begin{gather*}\mathbf{p} :=\sum_{j,\theta}c_{j,\theta}\theta Y_j \in F\{Y_1,\dots,Y_m\}_\Pi^1;\qquad\quad\text{and} \quad\qquad \mathbf{q} := \sum_{j,\theta}d_{j,\theta}\theta Y_j\in F\{Y_1,\dots,Y_m\}_\Pi^1, \intertext{so that \eqref{alarge-ops} now reads} \mathbf{p}\bigl(\tfrac{\partial_1\eta}{\eta},\dots,\tfrac{\partial_m\eta}{\eta}\bigr)-\mathbf{q}\bigl(\tfrac{\partial_1\zeta}{\zeta},\dots,\tfrac{\partial_m\zeta}{\zeta}\bigr)\in K.\intertext{Since $\Lambda\neq\{ 1\}$, we may assume that} \mathbf{p}\bigl(\tfrac{\partial_1\eta}{\eta},\dots,\tfrac{\partial_m\eta}{\eta}\bigr) \notin K \quad\qquad\text{and} \quad\qquad\mathbf{q}\bigl(\tfrac{\partial_1\zeta}{\zeta},\dots,\tfrac{\partial_m\zeta}{\zeta}\bigr) \notin K. \intertext{Since, for each $\gamma\in \Gamma$,}
\gamma \bigl( \mathbf{p}\bigl(\tfrac{\partial_1\eta}{\eta},\dots,\tfrac{\partial_m\eta}{\eta}\bigr)\bigr)= \mathbf{p}\bigl(   \tfrac{\partial_1\eta}{\eta},\dots,\tfrac{\partial_m\eta}{\eta} \bigr)+\mathbf{p}\bigl(\tfrac{\partial_1a_\gamma}{a_\gamma},\dots,\tfrac{\partial_ma_\gamma}{a_\gamma}\bigr), \intertext{there exists $a\in A$ such that $\mathbf{p}\bigl(\tfrac{\partial_1a}{a},\dots,\tfrac{\partial_ma}{a}\bigr)\neq 0$. Note that} \delta_x\bigl(\mathbf{p}\bigl(\tfrac{\partial_1\eta}{\eta},\dots,\tfrac{\partial_m\eta}{\eta}\bigr)\bigr) =\mathbf{p}\bigl(\partial_1u,\dots,\partial_mu\bigr)\qquad\quad \text{and} \quad\qquad \delta_x\bigl(\mathbf{q}\bigl(\tfrac{\partial_1\zeta}{\zeta},\dots,\tfrac{\partial_m\zeta}{\zeta}\bigr)\bigr) =\mathbf{q}\bigl(\partial_1r_1,\dots,\partial_mr_1\bigr).\end{gather*}

The finite-dimensionality of the $F$-vector space generated by such pairs $(\mathbf{p},\mathbf{q})$ follows from the fact that both $L$ and $N$ have finite algebraic transcendence degree over $K$, so we only need to consider finitely many elements from the set $\{\theta\tfrac{\partial\eta}{\eta},\theta\tfrac{\partial\zeta}{\zeta} \ | \ \partial\in \Pi, \ \theta\in\Theta\}$. This concludes the proof of Proposition~\ref{alarge}.\end{proof}

We will now apply Proposition~\ref{product} to compute $G$ in case \ref{dihedral}. Recall there exists a solution $u$ to the Riccati equation $P_q(u)=0$ such that $u$ is quadratic over $K$. We denote by $\bar{u}$ the unique Galois conjugate of $u$, and set $w:=u-\bar{u}$. Then $w^2\in K$, so $\tfrac{\delta_xw}{w}=:v\in K$. There is a differential algebraic subgroup $A\leq\mathbb{G}_m(F)$ such that $H\simeq A\rtimes\{\pm 1\}$.

\begin{prop} \label{dihedral-comp} In case \ref{dihedral}, with notation as above, exactly one of the following possibilities holds: \begin{enumerate}[(i)]
\item \label{dihedral-1} $D$ is finite of even order $2k$, and $v-kr_1=\tfrac{\delta_xf}{f}$ for some $f\in K$. \item \label{dihedral-2} $\Lambda=\{1\}$. \end{enumerate}

Consequently, in each of these cases $G$ coincides with the subset of matrices in $$\left\{ \begin{pmatrix} e_1a & 0 \\ 0 & e_1a^{-1} \end{pmatrix}, \begin{pmatrix} 0 & -e_2a \\ e_2a^{-1} & 0 \end{pmatrix} \ \middle| \ a\in A; \ e_1,e_2\in D\right\}$$ that satisfy the corresponding set of conditions below:\begin{enumerate}[(1)] \item \protect{\label{g-dihedral}}In case \eqref{dihedral-1}, $e_1^k=1$ and $e_2^k=-1$. \item In case \eqref{dihedral-2}, there are no further conditions.
\end{enumerate}\end{prop}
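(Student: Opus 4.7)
The plan is to invoke Proposition~\ref{product} to obtain $\Gamma\simeq H\times_\Lambda D$, after which the formula for $G$ follows from the surjection~\eqref{gammatog} of Remark~\ref{uppv}. The only thing to determine is the common quotient $\Lambda$ of $H$ and $D$, which is easier here than in the Borel case because $H$ has no unipotent radical.

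I would first constrain $\Lambda$ group-theoretically. Because $\Lambda$ is a quotient of the abelian group $D\leq\mathbb{G}_m(F)$, it must be abelian, and the surjection $H\twoheadrightarrow\Lambda$ factors through the abelianization of $H=A\rtimes\{\pm 1\}$. A commutator computation gives $[H,H]\supseteq A^2:=\{a^2:a\in A\}$, and by Proposition~\ref{classification}, $A$ is $2$-divisible (so $A^2=A$) whenever it is infinite, while in the finite case $A/A^2$ has exponent $2$. Combined with the fact that quotients of $D$ are cyclic by Proposition~\ref{classification}, these constraints restrict $\Lambda$ to the possibilities $\{1\}$ or $\{\pm 1\}$, with the latter forcing $D$ to be finite of even order $2k$.

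When $\Lambda\simeq\{\pm 1\}$, the field $R$ is a quadratic extension of $K$. Because $R\subseteq N$ and the algebraic closure of $K$ inside $N$ equals $K(\zeta)$ when $D$ is finite (by Theorem~\ref{kolostro} applied to $\zeta$ together with the additive generators $\theta\tfrac{\partial_j\zeta}{\zeta}$), $R$ must be the unique quadratic subextension $K(\zeta^k)$ of the cyclic extension $K(\zeta)/K$. On the other hand, inside $M$ the element $w$ generates the quadratic extension $K(w)$ fixed pointwise by $A$ (since $a\in A$ scales $\eta_1,\eta_2$ by $a,a^{-1}$, preserving $w=u-\bar u$ and the Wronskian-related product $\eta_1\eta_2\in F\cdot w^{-1}$), and the Galois-theoretic identification of $R$ as the fixed field of $\ker(H\twoheadrightarrow\Lambda)$ forces $R=K(w)$. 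Hence $K(\zeta^k)=K(w)$, and by Kummer theory $\zeta^k=gw$ for some $g\in K^\times$; comparing logarithmic derivatives gives $kr_1=\tfrac{\delta_xg}{g}+v$, equivalently $v-kr_1=\tfrac{\delta_xf}{f}$ upon setting $f=g^{-1}\in K$. Conversely, given such $f$, the element $f\zeta^k\in N$ has logarithmic derivative $v$, so $f\zeta^k/w$ is a $\delta_x$-constant in $F=U^{\delta_x}$; hence $w\in N$, so $R\supseteq K(w)$ and $\Lambda\neq\{1\}$.

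Finally, I would assemble $G$ from $\Gamma=H\times_\Lambda D$ via~\eqref{gammatog}. In case~\eqref{dihedral-2}, Corollary~\ref{lambda-trivial} yields $\Gamma\simeq H\times D$ and produces the displayed set of matrices with no further constraints. In case~\eqref{dihedral-1}, the surjections $H\twoheadrightarrow\{\pm 1\}$ and $D\twoheadrightarrow\{\pm 1\}$ are respectively the canonical projection $H/A$ (sending diagonal matrices to $+1$ and antidiagonal matrices to $-1$) and the power map $e\mapsto e^k$; matching signs in the fiber product then pairs a diagonal matrix with $e_1\in D$ satisfying $e_1^k=1$ and an antidiagonal matrix with $e_2\in D$ satisfying $e_2^k=-1$. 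I expect the main obstacle to be the initial group-theoretic constraint pinning $\Lambda$ down to $\{1\}$ or $\{\pm 1\}$ and identifying $R=K(w)$; once these are settled, the rest reduces to Kolchin-Ostrowski calculations parallel to those in Propositions~\ref{asmall} and~\ref{alarge}.
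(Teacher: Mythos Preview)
Your proposal follows the same strategy as the paper: constrain the common abelian quotient $\Lambda$ of $H$ and $D$ to $\{1\}$ or $\{\pm 1\}$ via the commutator subgroup of $H$ and the structure of $D$, identify $R$ simultaneously with $K(w)\subset M$ and with the unique quadratic subextension $K(\zeta^k)\subset N$ when $\Lambda\neq\{1\}$, and compare logarithmic derivatives to obtain the relation $v-kr_1=\tfrac{\delta_xf}{f}$. The only differences are cosmetic: you argue $[H,H]\supseteq A^2$ and invoke $2$-divisibility where the paper simply asserts $[H,H]=A$, and you supply the converse implication (that condition~\eqref{dihedral-1} forces $\Lambda\neq\{1\}$) explicitly, which the paper leaves implicit.
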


\begin{proof} Since the commutator subgroup $[H,H]$ of $H$ coincides with $\bigl\{\bigl(\begin{smallmatrix} a & 0 \\ 0 & a^{-1}\end{smallmatrix}\bigr) \ | \ a\in A\bigr\}$, and $\Lambda$ is abelian, the surjection $H\twoheadrightarrow \Lambda$ factors through $H/[H,H]\simeq\{\pm 1\}$, the $\mathrm{PPV}$-group of the quadratic subextension $K(u)\subset M$. Therefore, $R\subseteq K(u)$ and $\Lambda$ is finite of order at most $2$.

If $D\leq\mathbb{G}_m(F)$ is infinite, then it is also connected \cite{cassidy:1972}, so its only finite quotient is $\Lambda=\{1\}$. If $D$ is finite of odd order, then $\Lambda=\{1\}$ is the only common quotient of $\{\pm 1\}$ and $D$. Hence, if $\Lambda\neq\{1\}$, $D$ must be finite of even order $2k$. Since $D=\mu_{2k}$ is cyclic, the field $K(\zeta^k)$ is the only quadratic subextension of $K(\zeta)$. By Theorem~\ref{kolostro} or classical Galois theory, $K(w)=K(\zeta^k)$ if and only if $w\zeta^{-k}\in K$. Otherwise, $R=K(w)\cap K(\zeta^k)$ coincides with $K$, which is impossible. If $w\zeta^{-k}:=f\in K,$ we see that $$v-kr_1=\tfrac{\delta_x(w\zeta^{-k})}{w\zeta^{-k}}=\tfrac{\delta_xf}{f}.$$ Letting $a_\gamma:=\tfrac{\gamma w}{w}$ and $e_\gamma:=\tfrac{\gamma\zeta}{\zeta}$ for $\gamma\in\Gamma$, we see that $a_\gamma e_\gamma^{-k}=1$. In other words, $\gamma w=w$ if and only if $e_\gamma^k=1$, and $\gamma w=-w$ if and only if $e_\gamma^k=-1$. To conclude the proof of \eqref{g-dihedral}, note that the elements of $H$ that fix $w$ are precisely those of the form $\bigl(\begin{smallmatrix} a & 0 \\ 0 & a^{-1}\end{smallmatrix}\bigr)$ for $a\in A$ \cite{kovacic:1986,ulmer-weil:2000}.
\end{proof}

\begin{rem} \protect{\label{finite-comp}} In case \ref{finite}, $H$ is a finite subgroup of $\mathrm{SL}_2(F)$. If $D\leq\mathbb{G}_m(F)$ is infinite, then $\Lambda=\{1\}$, since it is finite and connected. If $D=\mu_s$ is finite, then $U$ is algebraic over $K$, and therefore so is $E$. By \cite[Prop. 3.6(2)]{cassidy-singer:2006}, $G$ coincides with the (non-parameterized) $\mathrm{PV}$-group of \eqref{noriginal}. We only sketch the computation of $G$ in this case.

For each factor $\ell$ of $s$ and each character $\chi:H\rightarrow \mu_{\ell}$ of order $\ell$, there is an element $w_\chi\in M$ such that $K(w_\chi)\subset M$ is cyclic of order $\ell$ and $\chi(\sigma)=\tfrac{\sigma w_\chi}{w_\chi}$ for each $\sigma\in H$. Thus $K(w_\chi)$ is the fixed field of $\mathrm{ker}(\chi)$. Such an element $w_\chi$ can be computed effectively (cf. the \emph{semi-invariants} discussed in \cite[\S4.3.1]{singer-ulmer:1993}). Let $$v_\chi:=\tfrac{\delta_xw_\chi}{w_\chi}\in K.$$ If there exist integers $0<k_1<\ell$ and $0<k_2<\tfrac{s}{\ell}$ such that $$k_1v_\chi-k_2r_1=\tfrac{\delta_xf}{f}$$ for some $f\in K$, then $$G\simeq\{(\sigma,e)\in H\times D \ | \ \chi(\sigma)^{k_1}=e^{k_2}\}.$$ If no such $k_1$ and $k_2$ can be found for any $\chi\in H^*$, the character group of $H$, then $\Lambda=\{1\}$.

When $H$ is finite in cases \ref{borel} and \ref{dihedral} (i.e., $A$ is finite and $B=0$), the computation of $G$ performed in Theorem~\ref{main2} and Proposition~\ref{dihedral-comp} coincides with the one just described. \end{rem}

\begin{rem}\protect{\label{sl2-comp}} In case \ref{full-sl2}, there is a finite subset $\Pi'$ of the $F$-span of $\Pi$ consisting of $F$-linearly independent, pairwise commuting derivations such that $H$ is isomorphic to the simple group $\mathrm{SL}_2(F^{\Pi'})$ \cite{cassidy:1972}. Therefore, the only abelian quotient of $H$ in this case is $\Lambda=\{1\}$.\end{rem}


\section{Example}\label{examples}

We let $K=F(x)$ denote the $\Delta$-field of the previous sections, where $\Pi:=\{\partial_1,\partial_2\}$, $\partial_j:=\tfrac{\partial}{\partial t_j}$ for $j=1,2$, and $F$ denotes a $\Pi$-closed field containing $\mathbb{Q}(t_1,t_2)$ \cite{kolchin:1974,trushin:2010}. We now compute the $\mathrm{PPV}$-group $G$ corresponding to \begin{equation}\label{egeq} \delta_x^2Y-2\Bigl( \tfrac{t_1-t_2}{x}+\tfrac{t_2}{x-1}\Bigr)\delta_xY+ \Bigl(\tfrac{(t_1-2t_2)(t_2-1)+2(t_1-t_2)^2x}{x^2}    +    \tfrac{t_1(2t_2-t_1+1)-2(t_1-t_2)^2(x-1)}{(x-1)^2}\Bigr)Y=0.\end{equation}Note that $r_1=\tfrac{t_1-t_2}{x}+\tfrac{t_2}{x-1}$, and the coefficient $q$ in the unimodular equation \eqref{unimodular} associated to \eqref{egeq} in this case is \vspace{-.06in}$$q=\tfrac{t_1(t_1-1)(1-2x)}{x^2} + \tfrac{(t_1-t_2)(2t_1x-t_1-t_2-1)}{(x-1)^2}.$$ The Riccati equation $\delta_xu+u^2=q$ admits the solution $u=\tfrac{t_1}{x}+\tfrac{t_1-t_2}{x-1}. $ Hence, we are in case~\ref{borel}, and there are differential algebraic subgroups $A\leq\mathbb{G}_m(F)$ and $B\leq\mathbb{G}_a(F)$ such that the $\mathrm{PPV}$-group $H$ for \eqref{unimodular} is defined by \eqref{borel-eqn}. By \cite{dreyfus:2011}, for every $a\in A$ and every linear differential polynomial $\mathbf{q}\in F\{Y_1,Y_2\}_\Pi^1$, \begin{equation}\label{computa} \mathbf{q}\bigl(\tfrac{\partial_1a}{a},\tfrac{\partial_2a}{a})=0 \qquad \Longleftrightarrow \qquad \mathbf{q}(\partial_1u,\partial_2u)\in\delta_x(K).\end{equation}Since $\partial_1u=\tfrac{1}{x}+\tfrac{1}{x-1}$ and $\partial_2u=-\tfrac{1}{x-1}$, we see that $$A=\bigl\{a\in\mathbb{G}_m(F) \ \big| \ \partial_j\bigl(\tfrac{\partial_1a}{a}\bigr)=0=\partial_j\bigl(\tfrac{\partial_2a}{a}\bigr) \ \text{for} \ j=1,2\bigr\}.$$ A similar computation shows that the $\mathrm{PPV}$-group $D$ for \eqref{determinant} is defined by the same linear differential polynomials as $A$.

To compute $B$, let $H'$ denote the (non-parameterized) $\mathrm{PV}$-group of \eqref{unimodular}. The unipotent radical $B'$ of $H'$ is $\mathbb{G}_a(F)$, by Kovacic's algorithm \cite{kovacic:1986} (see also \cite[proof of Cor.~3.3]{arreche:2013} for a similar computation). Since the only derivation $$\partial\in F\cdot\partial_1\oplus F\cdot\partial_2$$ such that $\partial u\in\delta_x(K)$ is $\partial=0$, the main result of \cite{arreche-unipotent} implies that $B=B'=\mathbb{G}_a(F)$.

Having computed $H$ and $D$, we apply Theorem~\ref{main2} to compute $G$. For any integers $k_1,k_2\in\mathbb{Z}$, \begin{equation}\label{residue} k_1u-k_2r_1 =\tfrac{t_1(k_1-k_2)+t_2k_2}{x}+\tfrac{t_1k_1-t_2(k_1-k_2)}{x-1}.\end{equation} If $k_1u-k_2r_1=\tfrac{\delta_xf}{f}$ for some $f\in K$, the residues of~\eqref{residue} are integers, which is impossible unless $k_1=0=k_2$, and therefore case~\eqref{borel-1} of Theorem~\ref{main2} doesn't hold. Now the relations \begin{equation*}\partial_1u+\partial_2u = \partial_1r_1 \qquad\quad \text{and} \quad\qquad \partial_1r_1+\partial_2r_1 = -\partial_2u,\end{equation*} correspond to the linear differential polynomials \begin{align*} \mathbf{p}_1 &:=Y_1+Y_2;      &        \mathbf{p}_2 &:=-Y_2;  \\ \mathbf{q}_1 &:=Y_1;     &        \mathbf{q}_2 &:=Y_1+Y_2.\end{align*} Letting $a\in A$ such that $\partial_1a=0$ and $\partial_2a=a$, we see that $\mathbf{p}_i\bigl(\tfrac{\partial_1a}{a},\tfrac{\partial_2a}{a}\bigr)\neq 0$ for $i=1, 2$, and we have verified the conditions of Theorem~\ref{main2}\eqref{borel-2}.

Since $\theta\partial_ju=0=\theta\partial_jr_1$ for each $\theta\in\Theta$ and $1\leq j\leq2$, the set\begin{gather*}\bigl\{(\mathbf{p}_1,\mathbf{q}_1); \ (\mathbf{p}_2,\mathbf{q}_2)\bigr\} \intertext{forms a basis for the $F$-vector space of pairs $(\mathbf{p},\mathbf{q})$, with $\mathbf{p},\mathbf{q}\in F\{Y_1,Y_2\}_\Pi^1$, such that} \mathbf{p}(\partial_1u,\partial_2u)-\mathbf{q}(\partial_1r_1,\partial_2r_1)\in\delta_x(K). \intertext{Therefore, the $\mathrm{PPV}$-group for \eqref{egeq} is} G\simeq \left\{ \begin{pmatrix}   ea & eb \\ 0 & ea^{-1} \end{pmatrix} \ \middle| \ \begin{matrix} a,e \in A; \ b \in B; \\ \\ \tfrac{\partial_1a}{a}+\tfrac{\partial_2a}{a}=\tfrac{\partial_1e}{e}; \\ \\ \tfrac{\partial_1e}{e}+\tfrac{\partial_2e}{e}=-\tfrac{\partial_2a}{a}\end{matrix}\right\}.\end{gather*}


\bibliographystyle{spmpsci} 

\end{document}